\newcommand{\lean}[1]{{\text{#1}}}
\newcommand{\bdg}{d v^\Sigma_{\bar{\boldsymbol{g}}}}
\newtheorem{thm}{Theorem}[section]
\newtheorem{prop}[thm]{Proposition}
\newtheorem{corr}[thm]{Corollary}
\newtheorem{lem}[thm]{Lemma}
\theoremstyle{definition}
\def\sideremark#1{\ifvmode\leavevmode\fi\vadjust{\vbox to0pt{\vss% the remark
 \hbox to 0pt{\hskip\hsize\hskip1em%                          will appear only
 \vbox{\hsize3cm\tiny\raggedright\pretolerance10000%          on the side
  \noindent #1\hfill}\hss}\vbox to8pt{\vfil}\vss}}}%
\theoremstyle{remark}
\newtheorem{rem}[thm]{Remark}
\newcommand{\cc}{\boldsymbol{c}}
\newcommand{\ce}{\mathcal{E}}
\newcommand{\cV}{\mathcal{V}}
\newcommand{\si}{\sigma}
\newcommand{\cT}{\mathcal{T}}
\newcommand{\Up}{\Upsilon}
\newcommand{\bg}{\boldsymbol{g}}
\newcounter{mnotecount}%[section]
\newcommand{\mnotex}[1]%{}
{\protect{\stepcounter{mnotecount}}$^{\mbox{\footnotesize $\bullet$\themnotecount}}$
\marginpar{%\color{red}%
\raggedright\tiny\em
$\!\!\!\!\!\!\,\bullet$\themnotecount: #1} }
\newcommand{\lpl}{
  \mbox{$
  \begin{picture}(12.7,8)(-.5,-1)
  \put(2,0.2){$+$}
  \put(6.2,2.8){\oval(8,8)[l]}
  \end{picture}$}}
\newcommand{\Addresses}{{% additional braces for segregating \footnotesize
  \bigskip
  \footnotesize

  B.~F.~A.,\,  \textsc{Department of Mathematics, The University of Auckland,
    Private Bag 92019, Auckland 1142, New Zealand}\par\nopagebreak
  \textit{E-mail address},   B.~F.~A, \, : \texttt{ball111@aucklanduni.ac.nz}

  \medskip

    A.~R.~G.,\, \textsc{Department of Mathematics, The University of Auckland,
    Private Bag 92019, Auckland 1142, New Zealand}\par\nopagebreak
  \textit{E-mail address},   A.~R.~G. \, : \texttt{r.gover@auckland.ac.nz}

}}
\title[Higher Willmore Energies]{Higher Willmore Energies from tractor coupled GJMS operators}
\author{Ben F.\ Allen and Rod Gover}
\date{}
\begin{document}

\thanks{B.A. \& A.R.G.\ gratefully acknowledges support from the Royal
  Society of New Zealand via Marsden Grant 19-UOA-008. A.R.G.'s
  contribution to this work was also partially supported by a grant from
  the Simons Foundation and EPSRC Grant Number EP/RO14604/1 during a
  visit to the Isaac Newton Institute for Mathematical Sciences}

\subjclass[2020]{primary: 53C18, 53B25; secondary: 53B15; 53A31; 53A55}
\keywords{Conformal geometry, submanifolds, higher Willmore energies,  geometric invariants, geometric PDEs}

\begin{abstract}
We define and construct a conformally invariant energy for closed
smoothly immersed submanifolds of even dimension, but of arbitrary
codimension, in conformally flat Riemannian manifolds. This is a
higher dimensional analogue of the Willmore
energy for immersed surfaces and is given directly via a coupling of the tractor connection to the (submanifold critical) GJMS
operators.
In the case where the submanifold is of
dimension 4 we compare this to other energies, including one found
using a second simple construction that uses $Q$-operators.
\end{abstract}

\maketitle

\section{Introduction}

The Willmore energy of a closed immersed surface $\Sigma$ in Euclidean
$n$-space is given by
\begin{equation}\label{willE}
\int_\Sigma |H|^2 ~ \bdg
\end{equation}
where $H^a$ is the mean curvature vector field. As the integrand is
quadratic in $H$ the Euler-Lagrange equation (from variations of
embedding) has a linear leading term, and this is $\Delta H^a$, up to
a non-zero constant factor, where $\Delta $ is this
normal-bundle-coupled submanifold Laplacian.  A significant part of
the interest in the energy \eqref{willE} stems from the fact that it
is conformally invariant
\cite{Blaschke,WillmoreBook,WillmoreArticle,MaNe}, which means that so
also is the Willmore equation. The functional gradient of
\eqref{willE} is an interesting conformal invariant of surfaces, in
particular because of the linear leading term.

There has been considerable interest in finding analogues of this
energy and equation for submanifolds of higher dimension. For
hypersurfaces (meaning submanifolds of codimension 1) in Euclidean
5-space \cite{Gu05} Guven attacked the problem by setting up an
explict undetermined coefficient problem to search among basic objects
to find a combination that is invariant under conformal motions.  More
recently some constructions of higher Willmore equations and energies
have used {\em holographic} approaches, meaning that each submanifold
is linked to solutions of an appropriate geometric partial
differential on the ambient (or host)  manifold. To the extent
that the solution is uniquely determined by the submanifold, the jets
of the solution capture the data of the submanifold. For hypersurfaces
in conformal manifolds, of any dimension, this was initiated in
\cite{GW-Barcelona} (online as \cite{GW-ann}) based on a singular
Yamabe problem, inspired by \cite{ACF,G-aE}, and then followed up in
detail in a series developing also surrounding theory
\cite{GW,Gr-Vol,GoWa16-1,GoWa16-2,GoWa16-3,GoWa19,GW-LN}; see also \cite{JuhlOrst}.  For
higher codimension embeddings, Graham-Reichert and Zhang have found
higher Willmore energy analogues \cite{GR,YZ} by exploiting what might
be called a 2-step holography that involves a minimal submanifold
problem in Poincar\'e-Einstein manifolds, as studied in the
Graham-Witten work \cite{GW} where already a link to the Willmore
energy was noted.

Each of these constructions mentioned provide higher analogues of the
Willmore equation, as the equations involved are conformally invariant
geometric PDEs that are fully determined by the conformal submanifold
embedding data, and have a linear leading term. For hypersurfaces this
is discussed in detail in \cite{GW-LN}. We will refer to invariant
submanifold action integrals as being of Willmore-type if their
variations with respect to embeddings have such an appropriate leading
term.

The holographic constructions of Willmore energies and related
invariants are interesting because the energies and their
Euler-Lagrange equations are determined indirectly as geometric
invariants of the formal asymtotics of a geometric PDE problem in the
ambient (or host) manifold in which the submanifold is embedded. This
is conceptually powerful as it means these quantities arise as data
in an application that is independently interesting. On the other
hand these holographic constructions do not directly provide a formula
in terms of the underlying conformal embedding. One needs to extract
such formulae from the jets of an asymptotic solution, and that can be
complicated.

This fact provides good motivation to seek simpler direct
constructions of the Willmore-type energies, and that is what we take
up in the current article. Apart from their direct interest for PDE
problems (such as understanding their extrema) the resulting formulae
can inform the holographic programme.

In the 2013 work \cite{V}, Vyatkin used the tractor calculus and a result from
\cite{BrGo05} to construct a curvature quantity $Q$, for four
dimensional hypersurfaces embedded in conformally flat spaces, that is somewhat of a hypersurface analogue of Branson's $Q$-curvature;
see
Lemma $5.2.7$ of \cite{V}. For closed hypersurfaces $\Sigma$, the
associated conformally invariant integral is
\begin{equation*}
  \int_\Sigma \left(\frac{4}{9}D^kI\!I^{\circ~}_{~jk}D_lI\!I^{\circ jl}-4p_j^{~l}I\!I^{\circ}_{~lk}I\!I^{\circ jk}+2\jmath I\!I^{\circ}_{~jk}I\!I^{\circ jk}\right)\bdg,
\end{equation*}
where $I\!I^{\circ}{}_{jk}:=I\!I^{\circ}{}_{jk}{}^aN_a$ is the tracefree
second fundamental form for hypersurfaces, $p_{jk}$ the intrinsic
Schouten tensor of the submanifold, and $\jmath$ is its metric trace.
Here $\bdg$ is the volume form density for the submanifold that is determined by the conformal structure. Vyatkin
shows that (up to a non-zero constant factor) the variation of this
action has leading term $\Delta^2H$, where $H$ is the mean curvature
of the submanifold. So it provides a higher Willmore energy that is
explicit, direct, and conformally invariant by construction.

A main focus of this paper is the construction of  a generalising analogue of this
{Vyatkin energy} to {arbitrary codimension}, by using the tractor
calculus for submanifolds developed in \cite{CuGoS} and the
$Q$-operators of \cite{BrGo03,BrGo05}, and this is done in Section
\ref{QOperatorEnergySection}. A second focus is the construction of
energies of Willmore-type for closed submanifolds of any even
dimension embedded in conformally flat spaces of arbitrary codimension
by using a coupling of the conformal Laplacian operators of \cite{GJMS} (the {\em GJMS operators}), and this can be found in
Section \ref{GJMS Energy section}. A comparison between the GJMS and
$Q$-operator energies is given in Section \ref{Comparing energies in
  dimension four}, and a proof that these energies are of
Willmore-type is given in Section \ref{GJMS is Wt}. An introduction to
the tractor calculus of conformal submanifolds is given in Section
\ref{Intro}.

The following two theorems summarise the results of this paper.
\begin{thm}\label{GJMSEnergyThm}
    Let $\Sigma^m\rightarrow M$ be a closed submanifold of even
    dimension $m$ immersed in a conformally flat Riemannian manifold
    $M$ of arbitrary codimension. There is a conformally invariant
    energy $\Tilde{\mathcal{E}}$ on $\Sigma$ of Willmore-type  defined by
    \begin{equation}\label{GeneralWillmoreenergyy}
        \Tilde{\mathcal{E}}:=\int_{\Sigma}N^A_BP^\nabla_mN^B_A\bdg \, ,
    \end{equation}
    where $N^A_B$ is the normal tractor projector and $P^\nabla_m$ is the intrinsic critical GJMS operator coupled to the ambient tractor connection.
\end{thm}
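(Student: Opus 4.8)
The plan is to treat separately the two assertions in the theorem: that the right-hand side of \eqref{GeneralWillmoreenergyy} defines a conformal invariant of the immersion, and that the associated functional is of Willmore-type.

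For the first assertion I would start from the submanifold tractor calculus of \cite{CuGoS}: along $\Sigma$ the ambient tractor bundle restricts as an orthogonal direct sum $\mathcal{T}M|_\Sigma = \mathcal{T}\Sigma \oplus \mathcal{N}$, with $\mathcal{T}\Sigma$ the canonically included intrinsic tractor bundle and $\mathcal{N}$ the normal tractor bundle, of rank equal to the codimension. The normal tractor projector $N^A{}_B$ is then the $\mathcal{T}M$-metric-orthogonal projection onto $\mathcal{N}$; since both the tractor metric and the inclusion $\mathcal{T}\Sigma\hookrightarrow\mathcal{T}M|_\Sigma$ are conformally invariant, $N^A{}_B$ is a genuine (weight $0$) tractor endomorphism field along $\Sigma$, independent of scale. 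Next I would observe that, because $M$ is conformally flat, the ambient tractor connection $\nabla$ is flat, hence so is the induced connection on $\mathrm{End}(\mathcal{T}M)|_\Sigma$; locally along $\Sigma$ this bundle therefore trivialises compatibly with $\nabla$. A GJMS operator coupled to a flat connection is, locally, just a direct sum of copies of the scalar critical GJMS operator of \cite{GJMS}, so --- conformal invariance and formal self-adjointness being local properties of natural operators --- the intrinsic critical GJMS operator $P^\nabla_m$ coupled to $\nabla$ is conformally invariant and formally self-adjoint, mapping $\mathrm{End}(\mathcal{T}M)|_\Sigma$-valued densities of weight $0$ to those of weight $-m$. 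A weight count then shows that $N^A{}_B P^\nabla_m N^B{}_A$ is a conformal density of weight $-m$ on $\Sigma^m$, i.e.\ an integration density; since $\Sigma$ is closed, \eqref{GeneralWillmoreenergyy} is a finite real number independent of the representative metric, which gives the conformally invariant energy $\widetilde{\mathcal{E}}$. (The detailed construction is carried out in Section \ref{GJMS Energy section}.)

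For the Willmore-type assertion the target is the first variation of $\widetilde{\mathcal{E}}$ under a normal deformation of the immersion, say with normal variation field $\nu$: I need its linear leading term to be a nonzero constant multiple of $\Delta^{m/2}H$, where $H$ is the mean curvature vector and $\Delta$ the normal-bundle-coupled submanifold Laplacian. The plan is to fix a metric in the conformal class, expand $N^A{}_B$ in its tractor slots --- here, as in the hypersurface case where the normal tractor carries $H$ in its lowest slot, the mean curvature vector appears in the off-diagonal entries of $N^A{}_B$ --- and isolate from $N^A{}_B P^\nabla_m N^B{}_A$ the part quadratic in $H$ with the most derivatives, which should be a constant times $H^a\Delta^{m/2}H_a$. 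Integrating by parts using the formal self-adjointness of $P^\nabla_m$, this term contributes to $\delta\widetilde{\mathcal{E}}$ a leading piece proportional to $\Delta^{m/2}H$. It then remains to check that no other term in the expansion of the integrand --- and no contribution from the variation $\delta P^\nabla_m$ of the operator caused by the change of the intrinsic conformal structure of $\Sigma$ --- feeds into this top derivative order. This variational computation is the content of Section \ref{GJMS is Wt}.

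The hard part is precisely this last bookkeeping: the integrand $N^A{}_B P^\nabla_m N^B{}_A$ unpacks into many curvature terms, and both these terms and $\delta P^\nabla_m$ must be shown to contribute only lower-order (or vanishing) corrections to the principal part of the Euler--Lagrange operator, while at the same time one must verify that the coefficient of $\Delta^{m/2}H$ is genuinely nonzero rather than cancelling. By contrast, the construction half is comparatively soft once one notices that flatness of the ambient tractor connection --- forced here by conformal flatness of $M$ --- is exactly what rescues conformal invariance of the tractor-coupled GJMS operator.
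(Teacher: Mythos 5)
Your first half (conformal invariance) is essentially the paper's own argument: flatness of the ambient tractor connection on a conformally flat $M$ gives a local parallel trivialisation of the tractor bundle along $\Sigma$, the coupled operator $P^\nabla_m$ becomes a direct sum of scalar critical GJMS operators in that frame, and a weight count makes $N^A{}_BP^\nabla_mN^B{}_A$ a section of $\ce[-m]$. This is exactly Proposition \ref{inv-p} and Proposition \ref{tE-prop}, so no issues there.

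The Willmore-type half has a genuine gap, and it sits exactly where you say the ``hard bookkeeping'' is. Your claim that the highest-order part of the integrand quadratic in $H$ is a constant times $H^a\Delta^{m/2}H_a$ is incorrect: the actual leading term is $mH_a(D_jD^j)^{m/2-1}H^a$, one full Laplacian lower than the naive count ``$H$ sits in a slot of $N$, $P_m\sim\Delta^{m/2}$.'' The reason is the tractor slot structure: differentiating $N^A{}_B$ produces $-\mathbb{L}_j{}^A{}_B-\mathbb{L}_{jB}{}^A$ (Lemma \ref{DerivativeOfNis2L}), and in the splitting \eqref{ExplicitSFF} the highest-order data ($D_jH^a$, and after further differentiation $\Delta^{k}H^a$) accumulates in the $X$-slot, which the tractor metric pairs against the $Y$-slot of the other factor — where only lower-order data lives. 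So the ``square of the top-order entry'' never survives the pairing, and the order drops by two. Establishing the correct leading term is precisely the content of the paper's contracted Codazzi identity \eqref{ContractConfRicc} and the induction in Lemma \ref{InductionLemma}, none of which your sketch supplies or replaces. Moreover, your proposal is internally inconsistent as written: if the integrand's leading term really were $\int H\Delta^{m/2}H$, then since $\delta H$ has leading term proportional to $\Delta\nu$ (not $\nu$) under a normal variation $\nu$, the Euler--Lagrange operator would have leading term $\Delta^{m/2+1}H$, not the claimed $\Delta^{m/2}H$; you only land on the right answer by implicitly treating $\delta H\sim\nu$. The correct statement — integrand leading term $H\Delta^{m/2-1}H$ combined with $\delta H\sim\frac{1}{m}\Delta\nu$ — is what yields $\Delta^{m/2}H$. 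To repair the proof you need either the paper's route (rewrite $\Tilde{\mathcal{E}}$ via $P_m=c_m\nabla^jQ^\nabla_1\nabla_j$ as a quadratic form in $\mathbb{L}$ and run the slot-by-slot induction), or an equivalent direct analysis of how derivatives migrate between the slots of $N^A{}_B$ under the coupled connection; the nonvanishing of the coefficient $m$ also only emerges from that computation.
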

\noindent The idea behind the construction of the above energy is as follows. On
a conformal manifold of even dimension $m$ there exist the GJMS
operators \cite{GJMS}, and more specifically the critical GJMS
operator, which has leading term $\Delta^{m/2}$. A conformal
submanifold embedded in a conformally flat manifold $M$ is equipped
with a conformally invariant flat connection on the ambient tractor
bundle -- this is simply the pullback to $\Sigma$ of the flat tractor
bundle on $M$. We couple the submanifold critical GJMS operator to
this flat connection, and this gives us an invariant operator defined
on the ambient tractor bundle. Composing this operator with the normal
tractor projector then produces a conformal density of the correct
weight so that the invariant integral above can be made. We refer to
this energy as the GJMS energy. For hypersurfaces a similar idea has been
used in \cite{GoWa16-2,BlGoWa} -- in that setting an extrinsically-coupled
variant of the GJMS operator was used. More recently, Martino
\cite{Mar23} constructed a Willmore-type energy for four-dimensional
closed hypersurfaces in conformally-flat backgrounds using the Paneitz
operator acting on the normal unit tractor.

\begin{thm}\label{QoperatorEnergyThm}
    Let $\Sigma^4\rightarrow M$ be a closed submanifold of dimension four immersed in a conformally flat Riemannian manifold $M$. The energy
    \begin{equation}\label{SecondGWE}
        \mathcal{E}:=\int_{\Sigma}\left(\left(\check{\nabla}_j\mathbb{L}^{jA}{}_B\right)\check{\nabla}^k\mathbb{L}_{kA}{}^B-4p_j{}^k\mathbb{L}^{jA}{}_B\mathbb{L}_{kA}{}^B+2\jmath\mathbb{L}^{jA}{}_B\mathbb{L}_{jA}{}^B\right)\bdg
    \end{equation}
    is a conformally invariant energy of Willmore-type, where $\mathbb{L}_{jA}{}^B$ is the tractor second fundamental form \eqref{TRACSECFF}, $p_{jk}$ is the intrinsic Schouten tensor and $\jmath$ its trace, and $\check{\nabla}$ is the normal-coupled checked tractor connection. 
\end{thm}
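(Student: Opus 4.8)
The plan is to establish the two required properties of $\mathcal{E}$ separately: conformal invariance of the integrand (as a density of the correct weight), and the Willmore-type property of its first variation. For conformal invariance, I would first record the conformal transformation law of the tractor second fundamental form $\mathbb{L}_{jA}{}^B$, which by construction (see \eqref{TRACSECFF} and the submanifold tractor calculus of \cite{CuGoS}) carries conformal weight in a controlled way, and is in particular a genuine submanifold tractor-valued $1$-form up to an exact term built from lower-order data. The key point is that the combination appearing in \eqref{SecondGWE} is precisely modeled on Vyatkin's hypersurface expression, with $\mathbb{L}_{jA}{}^B$ playing the role that $I\!I^\circ_{jk}N_a$ played there and $\check\nabla$ the role of $D$; so I would verify that the three terms
\[
\left(\check{\nabla}_j\mathbb{L}^{jA}{}_B\right)\check{\nabla}^k\mathbb{L}_{kA}{}^B,\qquad p_j{}^k\mathbb{L}^{jA}{}_B\mathbb{L}_{kA}{}^B,\qquad \jmath\,\mathbb{L}^{jA}{}_B\mathbb{L}_{jA}{}^B
\]
have conformal variations that cancel in the stated linear combination, using the transformation laws of the intrinsic Schouten tensor $p_{jk}$, its trace $\jmath$, the checked tractor connection $\check\nabla$ (its failure to be conformally invariant is governed by a $1$-form $\Upsilon_j$ in the standard way), and $\mathbb{L}_{jA}{}^B$. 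This is essentially the computation Vyatkin carried out in Lemma $5.2.7$ of \cite{V}, now lifted to the tractor bundle and to arbitrary codimension; since the ambient space is conformally flat and $m=4$, the relevant intrinsic and tractor curvature obstructions that could spoil the cancellation either vanish or are controlled, and I expect the same coefficients $(1,-4,2)$ to work verbatim.

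The second and harder part is showing $\mathcal{E}$ is of Willmore-type, i.e.\ that its variation under a normal perturbation of the immersion has linear leading term $\Delta^2 H$ (up to a nonzero constant), where $\Delta$ is the normal-bundle-coupled submanifold Laplacian and $H$ the mean curvature vector. The strategy is: under a normal variation $\dot X = \phi^a$, the tractor second fundamental form varies as $\dot{\mathbb{L}}_{jA}{}^B = \check\nabla_j(\text{something linear in }\phi) + (\text{lower-order in derivatives of }\phi)$, where the leading piece recovers the variation of $I\!I$, whose trace is $H$. The first term in \eqref{SecondGWE}, being quadratic in $\check\nabla\mathbb{L}$, therefore contributes (after integration by parts on the closed $\Sigma$) a term of the schematic form $\check\nabla^4$ applied to the leading part of $\dot{\mathbb{L}}$, and tracing/projecting appropriately this is $\Delta^2$ acting on (the variation related to) $H$; the other two terms are lower order in derivatives and only affect subleading structure. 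I would make this precise by linearising each term, collecting the highest-derivative-in-$\phi$ contributions, and identifying the resulting operator with $\Delta^2 H$ via the relation between $\check\nabla$ and $\nabla$ on the normal bundle together with the top-slot component of $\mathbb{L}_{jA}{}^B$.

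The main obstacle I anticipate is bookkeeping the tractor slots: $\mathbb{L}_{jA}{}^B$ has several components in the tractor splitting, and while its "top" component is the tracefree second fundamental form, the lower slots contribute to both the conformal variation and the first variation in ways that must be tracked to confirm (i) that the non-invariant pieces genuinely cancel in the combination $(1,-4,2)$ and (ii) that the subleading tractor slots do not produce an unwanted competing fourth-order term in $\dot X$. A clean way to manage this is to work in a scale (choice of metric in the conformal class) adapted to $\Sigma$ — e.g.\ using the intrinsic metric — so that many slots simplify, prove the identities there, and then invoke the already-established conformal invariance to conclude the result holds in the conformal class. I also expect to need the conformal flatness of $M$ crucially: it forces the ambient Weyl and Cotton tensors to vanish, which is what allows the tractor second fundamental form and the checked connection to satisfy the same structural identities that $I\!I^\circ$ and $D$ do in Vyatkin's hypersurface computation, so that no extra curvature correction terms appear in either the invariance check or the variational leading-term computation.
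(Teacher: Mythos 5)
Your two-part strategy (conformal invariance of the integrand, then the leading term of the first variation) is viable in outline, but it is a genuinely different route from the paper's. For invariance the paper does not check the $(1,-4,2)$ cancellation term by term: it recognises the integrand as $Q^{\bar g}_1(\mathbb{L},\mathbb{L})$ and invokes the transformation law of the $Q_1$-operator on closed forms (Proposition \ref{ChangeQ1ua} and Corollary \ref{pair-c}, whose proofs use no commutation of derivatives and therefore survive coupling to $\check{\nabla}$). For the Willmore-type property the paper does not linearise $\mathcal{E}$ directly; it shows $\mathcal{E}$ differs from the GJMS energy $\Tilde{\mathcal{E}}$ only by terms quartic in $I\!I^{\circ}$ (Lemma \ref{Lemmm}) and then computes, slot by slot via Lemma \ref{InductionLemma}, that the integrand of $\Tilde{\mathcal{E}}$ has leading part a non-zero multiple of $H_a(D_jD^j)^{m/2-1}H^a$.

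The concrete gap in your proposal is that you never isolate the identity on which both halves hinge: the tractor second fundamental form is closed for the normal-coupled checked connection, $\check{\nabla}_{[i}\mathbb{L}_{j]A}{}^B=0$, which follows from the tractor Codazzi--Mainardi equation \eqref{ConformalCodazzi-Mainardi} together with the vanishing of the ambient tractor curvature on a conformally flat background. You describe $\mathbb{L}$ as a tractor-valued $1$-form ``up to an exact term'' and appeal to unspecified ``structural identities,'' but closedness (not exactness) is what is required, and without it your plan fails in two places. First, the transformation law $Q^{\hat g}_1v_a=Q^g_1v_a+4\nabla^b\nabla_{[a}(\Upsilon v_{b]})$ --- and hence the cancellation with coefficients $(1,-4,2)$ after pairing with a second closed form --- holds only for closed $v$, so your term-by-term verification cannot close up otherwise. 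Second, in the variational computation the middle slot of $\check{\nabla}^j\mathbb{L}_{jK}{}^A$ is $D^jI\!I^{\circ}{}_{jk}{}^a-(D_kH^a-\Pi^c_kN^a_dP_c{}^d)$, and it is precisely the trace of the closedness identity, Equation \eqref{ContractConfRicc}, that converts $D^jI\!I^{\circ}{}_{jk}{}^a$ into $(m-1)(D_kH^a-\cdots)$ and so identifies the leading part of the integrand as a non-zero constant times $|DH|^2\sim H_a\Delta H^a$; without this step you cannot rule out that the putative leading term is instead built from $D^kI\!I^{\circ}{}_{jk}{}^a$, nor conclude that the Euler--Lagrange operator has linear leading term a non-zero multiple of $\Delta^2H^a$. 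Once this identity is put in place, your outline can be completed.
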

\noindent The above energy, which we refer to as the $Q$-operator
energy, is constructed using an interesting application of
$Q$-operators in conjunction with the tractor calculus of
submanifolds. $Q$-operators are operators which can be used together
with closed forms to produce densities on manifolds that, in a
suitable sense, are analogues of the $Q$-curvature. Further details of
this process can be found in Section \ref{FirstQOperator} and in
\cite{BrGo03,BrGo05}. On conformal submanifolds there is a Codazzi
type equation which, when the ambient manifold is conformally flat,
tells us that the tractor analogue of the second fundamental form is
closed with respect to a submanifold tractor connection. By coupling a
specific $Q$-operator on the submanifold to this submanifold tractor
connection, we can use the tractor second fundamental form to
construct a $Q$-like density on the submanifold, and thus an invariant
integral, and this is the quantity \eqref{SecondGWE} above.

The GJMS energy and the $Q$-operator energy are different in dimension
four. These energies are related by
\begin{equation}\label{SuperLastReferencedEquation}
    \Tilde{\mathcal{E}}=-2\mathcal{E}+4\int_{\Sigma}\left(I\!I^{\circ j}{}_b{}^aI\!I^{\circ}{}_j{}^b{}_dI\!I^{\circ}{}_{kc}{}^dI\!I^{\circ kc}{}_a+I\!I^{\circ j}{}_b{}^aI\!I^{\circ}{}_j{}^d{}_aI\!I^{\circ}{}_{kd}{}^cI\!I^{\circ kb}{}_d\right)\bdg,
\end{equation}
where $I\!I^{\circ}{}_{ja}{}^b$ is the tracefree second fundamental
form. The details of the computation for the above equation can be
found in subsection \ref{Comparing energies in dimension four}. Establishing Expression \eqref{SuperLastReferencedEquation} uses that the GJMS energy can  also be viewed as arising from the $Q$-operators, see Propositions \ref{tE-prop} and \ref{tE-QL}.

A family of examples of submanifolds which are critical for these energies are those which are umbilic; see Remark \ref{RemUmb}.

The Willmore-type energy constructed in \cite[{\bf Remark 5.4}]{GR} for four dimensional submanifolds of arbitrary codimension, commonly referred to as the {\em Graham-Reichert energy} $\mathcal{E}_{GR}$, is written below in our notation.
\begin{equation*}
    \begin{split}
        8\mathcal{E}_{GR}:=\int_{\Sigma}\biggl(&(D_jH_b-P_{ja}N^a_b)(D^jH^b-P^{jc}N^b_c)-|\check{P}|^2+\check{J}\\
    &-W^k{}_{akb}H^aH^b-2C^k{}_{ka}H^a-\frac{1}{n-4}B^k{}_k\biggr)\bdg.
    \end{split}
\end{equation*}
Here $C_{abc}:=2\nabla_{[a}P_{b]c}$ is the ambient Cotton tensor and $B_{ab}:=\nabla^aC_{cab}+P^{cd}W_{acbd}$ is the ambient Bach tensor. In Section \ref{GRcompare} we will show that, for four dimensional submanifolds immersed in conformally flat ambient manifolds of arbitrary codimension, the $Q$-operator energy and the Graham-Reichert energy have difference given by
\begin{equation*}
    32\mathcal{E}_{GR}-\mathcal{E}=16\pi^2\chi(\Sigma)+\int_{\Sigma}\left(-4|\mathcal{F}|^2+4\mathrm{f}^2-\frac{1}{2}w_{ijkl}w^{ijkl}\right)\bdg,
\end{equation*}
where $\mathcal{F}_{jk}$ is a conformally invariant tensor called the Fialkow tensor (see Section \ref{CheckedConnetionSection}, or \cite[{\bf Section 3.2.6}]{V}), $\mathrm{f}:=\mathcal{F}_{jk}\bg^{jk}$ is the metric trace of the Fialkow tensor, $\chi\left(\Sigma\right)$ is the Euler characteristic of the submanifold $\Sigma$, and $w_{ijkl}$ is the intrinsic Weyl tensor on $\Sigma$. Notice that this difference above is manifestly conformally invariant.

There are other comparisons between Willmore type energies in dimension four: In \cite[{\bf Section 4}]{BlGoWa} an energy is constructed for hypersurfaces in conformally flat manifolds using the Paneitz operator applied to the normal unit tractor, similar to \cite{GoWa16-2,Mar23}. There they compare this energy to various known Willmore type energies, such as those of Guven and Graham-Reichert. More comparisons are given in \cite{GR}.

Since the main work of this paper there have been posted two further works
with links to the topics here
\cite{Blitz-Silhan,McKoewnetal}.

\section{Notation and preliminaries}\label{Intro}

For simplicity of exposition we will work on manifolds equipped with
Riemannian signature metrics, although with minor adjustments the
theory developed applies in any signature.

\subsection{Some conventions for Riemannian geometry and submanifolds}
\label{conventions}

For tensorial calculations we will use Penrose's abstract index
notation, unless otherwise indicated. We write $\mathcal{E}_a$ and
$\mathcal{E}^a$ as (alternative) notations for, respectively, the
cotangent bundle and the tangent bundle. A contraction of a 1-form
$\omega$ with a tangent vector $v$ is written with a repeated abstract
index $\omega_a v^a$.  Tensor bundles are denoted then by attaching to
the symbol $\mathcal{E}$ indices in a way that encodes the tensor type. For example $\ce_{ab}$ means
$T^*M \otimes T^*M$, while $\mathcal{E}_{(ab)}$ is the abstract index
notation for $S^2T^*M$, the subbundle of symmetric tensors in
$T^*M\otimes T^*M$. Another example is the bundle $\bigwedge^2T^*M$ of skewsymmetric tensors, which in abstract index notation is written as $\mathcal{E}_{[ab]}$.

On a Riemannian $n$-manifold $(M,g)$, our convention for the
Riemann tensor $R_{ab}{}^c{}_d$ is such that
\begin{align} \begin{split} \label{riemdef}
\left[ \nabla_a,  \nabla_b \right] v^c &= R_{ab}{}^c{}_d v^d, \\
\end{split}
\end{align}
where $\nabla_a$ is the Levi-Civita connection of a metric $g_{ab}$ and
$v^c$ any tangent vector field. As is well known
$R_{abcd}=g_{ce}R_{ab}{}^e{}_d$ may be decomposed 
\begin{equation} \label{riemdec}
R_{abcd} = W_{abcd} + 2 \left(g_{c[a}P_{b]d} + g_{d[b}P_{a]c}\right),
\end{equation}
where the completely trace-free part $W_{abcd}$ is called the {\em Weyl
  tensor}. It follows that in dimensions $n\geq 3$ we have
\begin{equation}
  P_{ab} =  \frac{1}{n-2} \left( R_{ab} - \frac{R}{2 \left( n-1 \right)} g_{ab}\right) ,
\end{equation}
where $R_{bc}= R_{ab}{}^a{}_c$ is the Ricci tensor, and its metric
trace $R=g^{ab}R_{ab}$ is scalar curvature. 
We will use $J$ to denote
the metric trace of Schouten, i.e. $J := g^{ab} P_{ab}$.

Given a smooth $n$-manifold $M$, a \emph{submanifold} will mean a
smooth immersion  $\iota : \Sigma \to M$ of a smooth $m$-dimensional
manifold $\Sigma$, where $1 \leq m \leq n-1$, and the image has
\emph{codimension} $d:= n-m$.  Normally we will suppress the
 immersion map and identify $\Sigma$ with its image
$\iota(\Sigma) \subset M$. In this context we refer to $M$ as the \emph{ambient}
manifold.

In our (abstract index) notation, the intrinsic tensor bundles for
immersed Riemannian submanifolds (and their equivalents) are written
with indices denoted with middle latin letters $i,j,k,\dots$, to help
distinguish these bundles from the corresponding ambient bundles. For
example, for the intrinsic tangent bundle we write $\mathcal{E}^j$,
and for the induced metric tensor $g_{ij}$. Here intrinsic refers to
the standard Riemannian objects which can be constructed using the
induced Riemannian metric with respect to the immersion. A
submanifold immersion $\Sigma\rightarrow M$  induces a short exact sequence of bundles on
the submanifold, which we write as
\begin{equation}\label{n-seq}
    0\longrightarrow\mathcal{E}^i\overset{\Pi^a_i}{\longrightarrow}\mathcal{E}^a\longrightarrow \mathcal{N}^b\longrightarrow0,
\end{equation}
where $\Pi^a_i$ denotes the pushforward map on the tangent bundle, and $\mathcal{N}^a$ is the normal bundle. Here, and
frequently in similar situations below, we write simply $\ce^a$ rather
than $\ce^a|_\Sigma=TM|_\Sigma$, as the restriction to the submanifold
is clear by context.

In the presence of a Reimannian metric $g$ (or even its conformal
class, as discussed below) the exact sequence \eqref{n-seq} defining
the normal bundle splits and we can identify $\mathcal{N}^b$ with a
subbundle of $\ce^a$, along $\Sigma$. We write $N^b_a:\ce^a|_\Sigma
\to \mathcal{N}^b$ for the orthogonal projection. This enables us to define the {\em normal connection} on $\mathcal{N}^b$
by
$$
\nabla_i^{\perp}v^a := N^a_b\nabla_i v^b ,
$$ where $\nabla_i$ is the pullback to $\Sigma$ of the ambient
Levi-Civita connection.

Our convention for the second fundamental form
$I\!I_{ij}{}^a$ is so that the Gauss formula is
\begin{equation*}
    \nabla_iu^a=\Pi^a_jD_iu^j+I\!I_{ij}{}^au^j,
\end{equation*}
where $u^j\in\Gamma\left(\mathcal{E}^j\right)$ is an intrinsic vector field, $u^a:=\Pi^a_ju^j$, and $D_i$ is the Levi-Civita connection of the metric $g_{ij}$. The corresponding Weingarten formula is
\begin{equation*}
    \nabla_iv^a=\nabla_i^{\perp}v^a-\Pi^a_jI\!I_i{}^j{}_bv^b,
\end{equation*}
where $v^a\in\Gamma\left(\mathcal{N}^a\right)$ is a normal vector
field and $\nabla^{\perp}$ is the induced connection on the normal
bundle called the normal connection. It will be convenient to extend
the intrinsic Levi-Civita connection to act on sections of the ambient
tangent bundle by orthogonal decomposition and coupling to the normal
connection. That is the extension is defined by
\begin{equation*}
    D_ju^a:=\Pi^a_iD_j\left(\Pi^i_bu^b\right)+\nabla_j^{\perp}\left(N^a_bu^b\right),
\end{equation*}
where $u^a\in\Gamma\left(\mathcal{E}^a\right)$.

Important submanifold quantities include the curvature tensors of the induced Levi-Civita connection $D$, such as the intrinsic Riemann tensor $r_{ijkl}$ defined by
\begin{equation*}
    r_{ij}{}^k{}_lu^l:=[D_i,D_j]u^k
\end{equation*}
for $u^k\in\Gamma\left(\mathcal{E}^j\right)$, the intrinsic Schouten tensor $p_{jk}$ defined by
\begin{equation}\label{int-Sch}
    p_{jk}:=\frac{1}{m-2}\left(r_{jk}-\frac{r_{il}g^{il}}{2(m-1)}g_{jk}\right)
\end{equation}
where $r_{jl}:=r_{kj}{}^k{}_l$ is the intrinsic Ricci tensor, and the metric trace of the intrinsic Schouten tensor $\jmath$. Extrinsic quantities include the mean curvature vector $H^a:=\frac{1}{m}g^{ij}I\!I_{ij}{}^a$, and the tracefree second fundamental form $I\!I^{\circ}{}_{ij}{}^a$, or umbilicity tensor, defined by
\begin{equation*}
    I\!I^{\circ}{}_{ij}{}^a:=I\!I_{ij}{}^a-g_{ij}H^a.
\end{equation*}
The umbilicity tensor is a conformal invariant of the submanifold,
along with the intrinsic Weyl tensor $w_{ijkl}$, which is the
completely trace-free part of $r_{ijkl}$, the ambient Weyl tensor
$W_{abcd}$, and the normal projection operator
$N^a_b:\mathcal{E}^b\rightarrow\mathcal{N}^a$. For further details see
\cite[{\bf Section 3.1}]{CuGoS}. Note that \eqref{int-Sch} is
evidently not defined for surfaces and curves. In fact there are
replacements, see \cite[{\bf Section 3.5}]{CuGoS}, but in the current
work we shall be mainly interested in the case of $m\geq 4$.

\subsection{The tractor bundle and connection}\label{t-sec}

For most of our discussion it will be convenient to work in the
setting of conformal manifolds. By a conformal manifold $(M,\cc)$ we
mean a smooth manifold equipped with an equivalence class $\cc$ of
Riemanian metrics, where $g_{ab}$, $\widehat{g}_{ab} \in \cc$ means
that $\widehat{g}_{ab}=\Omega^2 g_{ab}$ for some smooth positive
function $\Omega$. On a general conformal manifold $(M,\cc)$, there is
no distinguished connection on $TM$. But if $n\geq 3$ there is an
invariant and canonical connection on a closely related bundle, namely
the conformal tractor connection on the standard tractor bundle.

Here we review the basic conformal tractor calculus, see
\cite{CuGo13,CuGoS} for more details.
Unless stated otherwise, calculations will be done with the use of  $g \in
\cc$.

Writing  $ \Lambda^{n} TM$ for the top exterior power of the
tangent bundle, note that  its square
$\mathcal{K}:=(\Lambda^{n} TM)^{\otimes 2}$  is
canonically oriented and so we can take compatibly oriented roots of it: Given
$w\in \mathbb{R}$ we denote
\begin{equation} \label{cdensities}
\ce[w]:=\mathcal{K}^{\frac{w}{2n}} ,
\end{equation}
and refer to this as the bundle of conformal densities. For any vector
bundle $\cV$, we then write $\cV[w]$ as a shorthand for
$\cV[w]:=\cV\otimes\ce[w]$.

There is a
canonical section $\bg_{ab}\in \Gamma(\ce_{(ab)}[2])$ with the
property that for each positive section $\si\in \Gamma(\ce_+ [1])$
(called a {\em scale}) $g_{ab}:=\si^{-2}\bg_{ab}$ is a metric in
$\cc$. Moreover, the Levi-Civita connection of $g_{ab}$ preserves
$\si$ and therefore $\bg_{ab}$. Thus we typically use the
conformal metric to raise and lower indices, even when we are chosing
a particular metric $g_{ab} \in \cc$ and its Levi-Civita connection
for calculations.  This simplifies 
computations, and so we  do that without further
mention. 

By examining the Taylor series of sections of $\ce[1]$ we can recover the
jet exact sequence at 2-jets for this bundle, 
\begin{equation}\label{J2}
0\to \ce_{(ab)}[1]\stackrel{\iota}{\to} J^2\ce[1]\to J^1\ce[1]\to 0 .
\end{equation}
Note that the bundle $J^2\ce[1]$ and its sequence \eqref{J2} are
canonical objects on any smooth manifold. However on a manifold with a conformal
structure $\cc$ we have also the orthogonal decomposition of $\ce_{ab}[1]$
into trace-free and trace parts
\begin{equation}
\ce_{ab}[1]= \ce_{(ab)_0}[1]\oplus \bg_{ab}\cdot \ce[-1] .
\end{equation}
This means that we can take a  quotient $J^2\ce[1]$ by the image of
$\ce_{(ab)_0}[1]$ under $\iota$ (in (\ref{J2})). The resulting
quotient bundle is denoted $\cT^*$, or $\ce_A$ in abstract indices,
and called the {\em conformal cotractor bundle}. Since the
jet exact sequence at 1-jets (of $\ce[1]$) is given,
$$ 
0\to \ce_{b}[1]\stackrel{\iota}{\to} J^1\ce[1]\to \ce[1]\to 0,
$$ 
it follows  that $\cT^*$ has a composition series
\begin{equation}\label{filt}
\cT^*=\ce[1]\lpl \ce_a [1] \lpl \ce[-1] ,
\end{equation}
where the notation 
means that $\ce[-1]$ is a subbundle of $\cT^*$ and the quotient
of $\cT^*$ by this (which is $J^1\ce[1]$) has $\ce_a [1]$ as a
subbundle, whereas there is a canonical projection $X:\cT^*\to \ce[1]$.
In abstact indices we write $X^A$ for this map and call it the {\em canonical tractor}.

Given a choice of metric $g \in\cc$, the formula
\begin{equation}\label{thomas-D}
  \sigma \mapsto
  \begin{pmatrix}
    \sigma \\
    \nabla_a \sigma \\
    - \frac{1}{n} \left( \Delta + J \right) \sigma
  \end{pmatrix}
\end{equation}
(where $\Delta $ is the Laplacian $\nabla^a\nabla_a$) gives a second-order differential operator on $\ce[1]$ which is a linear map $J^2 \ce[1] \to \ce[1] \oplus \ce_a [1] \oplus \ce[-1]$ that clearly factors through $\cT^*$ and so, in this way, $g$ determines an isomorphism
\begin{equation}\label{T_isom}
  \cT^* \stackrel{\sim}{\longrightarrow} {[\cT^*]}_g = \ce[1] \oplus \ce_a [1] \oplus \ce[-1].
\end{equation}

We will use~\eqref{T_isom} to split the
tractor bundles without further comment.  Thus, given $g \in \cc$, an
element $V_A$ of $\ce_A$ may be represented by a triple
$(\si,\mu_a,\rho)$, or equivalently by
\begin{equation}\label{Vsplit}
  V_A=\si Y_A+\mu_a Z_A{}^a+\rho X_A.
\end{equation}
The last display defines the algebraic splitting operators
$Y:\ce[1]\to \cT^*$ and $Z :T^*M[1]\to \cT^*$ (determined by the
choice $g_{ab} \in \cc$) which may be viewed as sections $Y_A\in
\Gamma(\ce_A[-1])$ and $Z_A{}^a\in \Gamma(\ce_A{}^a [-1])$.  We call
these sections $X_A, Y_A$ and $Z_A{}^a$ \emph{tractor projectors}.

By construction the tractor bundle is conformally invariant, i.e. it
is determined by $(M,\cc)$ and indpendent of any choice of
$g\in\cc$. However the splitting \eqref{Vsplit} is not. Considering
the transformation of the operator \eqref{thomas-D}, determining the
splitting, we see that if $\widehat{g}=\Omega^2 f$ the components of an
invariant section of $\cT^*$ should transform according to:
\begin{equation}\label{ttrans}
[\cT^*]_{\widehat{g}}\ni \left(\begin{array}{c}
\widehat{\sigma}\\
\widehat{\mu}_b\\
\widehat{\rho}
\end{array}\right)=
\left(\begin{array}{ccc}
1 & 0 & 0\\
\Upsilon_b & \delta^c_b & 0\\
-\frac{1}{2}\Upsilon^2 & -\Upsilon^c & 1
\end{array}\right)\left(\begin{array}{c}
\sigma\\
\mu_c\\
\rho
\end{array}\right)~\sim~ \left(\begin{array}{c}
\sigma\\
\mu_b\\
\rho
\end{array}\right)\in [\cT^*]_g,
\end{equation}
where $\Up_a=\Omega^{-1}\nabla_a \Omega$. This transformation of
triples is the characterising property of an invariant tractor
section. Equivalent to the last display is the rule for how the
algebraic splitting operators transform
\begin{equation}\label{XYZtrans}
\textstyle
\widehat{X}_A=X_A, \quad  \widehat{Z}_A{}^{b}=Z_A{}^{b}+\Up^bX_A, \quad
\widehat{Y}_A=Y_A-\Up_bZ_A{}^{b}-\frac12\Up_b\Up^bX_A \, .
\end{equation}

Given a metric $g \in \cc$, and the corresponding splittings, as above, the tractor connection is given by the formula
 \begin{equation}\label{tr-conn}
 \nabla_a^\cT  \begin{pmatrix}
    \sigma \\
    \mu_b \\
    \rho
  \end{pmatrix} :=\begin{pmatrix}
   \nabla_a \sigma-\mu_a \\
    \nabla_a\mu_b+P_{ab}\si+\bg_{ab}\rho \\
    \nabla_a\rho- P_{ac}\mu^c
  \end{pmatrix} ,
\end{equation}
where on the right hand side the $\nabla$s are the Levi-Civita connection of $g$. Using the transformation of components, as in \eqref{ttrans}, and also the conformal transformation of the Schouten tensor,
\begin{equation}\label{Ptrans}
P^{\widehat{g}}_{\phantom{\widehat{g}}ab}=P_{ab}-\nabla_a\Upsilon_b
+\Upsilon_a\Upsilon_b-\frac{1}{2}g_{ab}\Upsilon_c\Upsilon^c,  \quad \widehat{g}=\Omega^2 g ,
\end{equation}
reveals that the triple on the right hand side transforms as a 1-form
taking values in $\cT^*$ -- i.e. again by \eqref{ttrans} except
twisted by $\ce_a$. Thus the right hand side of \eqref{tr-conn} is the
splitting into slots of a conformally invariant connection
$\nabla^\cT$ on a section of the bundle $\cT^*$.

The tractor bundle is also equipped with a conformally invariant
signature $(n+1,1)$ metric $h_{AB} \in \Gamma \left(\ce_{(AB)}
\right)$ (where, note, $\ce_{(AB)}$ is the abstract index notation for $S^2\cT^*$), defined as quadratic form by the mapping
\begin{equation}
[V_A]_g = \begin{pmatrix}
    \sigma \\
    \mu_a \\
    \rho
  \end{pmatrix} \mapsto \mu_a \mu^a +2  \si \rho =: h \left(V,V \right). 
\end{equation}
It is easily checked that this {\em tractor metric} $h$ is conformally
invariant and 
is preserved by $\nabla_a^\cT$, i.e. $ \nabla_a^\cT h_{AB} =0$. Thus
it makes sense to use $h_{AB}$ (and its inverse) to raise and lower
tractor indices, and we do this henceforth. In
particular $X^A=h^{AB}X_B$ is the canonical tractor (and hence our use
of the same symbol). For computations the table of Figure
\ref{tmf} is useful,
\begin{figure}[ht]
$$
\begin{array}{l|ccc}
& Y^A & Z^{Ac} & X^{A}
\\
\hline
Y_{A} & 0 & 0 & 1
\\
Z_{Ab} & 0 & \delta_{b}{}^{c} & 0
\\
X_{A} & 1 & 0 & 0
\end{array}
$$
\caption{Tractor inner product}
\label{tmf}
\end{figure}
and we see that $h$ may be decomposed into a sum of projections
$$
h_{AB}=Z_A{}^aZ_{B}{}^b\bg_{ab}+X_AY_B+Y_AX_B\, .
$$

For computations it is also useful to note that the 
tractor connection is determined by its action on the splitting operators:
\begin{align}
    \nabla_aX_B=&\bg_{ab}Z_B^b\label{TracConn1}\\
    \nabla_aZ_B^b=&-\delta_a^bY_B-P_a{}^bX_B\label{TracConn2}\\
    \nabla_aY_B=&P_{ab}Z_B^b.\label{TracConn3}
\end{align}

We refer to any of the following bundles
\begin{equation*}
    \mathcal{E}^{A_1\cdots A_r}_{B_1\cdots B_s}:=\bigotimes_{i=1}^r\mathcal{E}^{A_i}\otimes\bigotimes_{j=1}^s\mathcal{E}_{B_j}
\end{equation*}
for non-negative integers $r$ and $s$, as tractor bundles, and sections of these bundles as tractors (of rank $(r,s)$). Subbundles, such as the symmetric, skewsymmetric, and tracefree tractor bundles, are defined in the obvious way.

\subsection{Tractor Curvature}

The curvature of the tractor
connection $\Omega_{ab}{}^C{}_D$  is defined by
\begin{equation}
    \Omega_{ab}{}^C{}_DV^D=2\nabla_{[a}\nabla_{b]}V^C,
\end{equation}
where $V^C\in\Gamma\left(\mathcal{E}^C\right)$. It follows from the
conformal invariance of the tractor connection that the tractor
curvature is conformally invariant.

We can compute the tractor curvature in terms of the splitting
operators. Using the definitions \ref{TracConn1}, \ref{TracConn2}, and
\ref{TracConn3} of the tractor connection on the splitting operators
we get
\begin{align*}
    \Omega_{ab}{}^C{}_DY^D=&Z^C_cY_{ab}{}^c\\
    \Omega_{ab}{}^C{}_DZ^D_d=&Z^C_cW_{ab}{}^c{}_d-X^CY_{abd}\\
    \Omega_{ab}{}^C{}_DX^D=&0,
\end{align*}
where $Y_{abc}=2\nabla_{[a}P_{b]c}$ is the Cotton tensor. Then
\begin{align}\label{AmbientCurvature2}
    \Omega_{ab}{}^C{}_D=&Z^C_cX_DY_{ab}^{~~c}+Z^C_cZ_D^dW_{ab~d}^{~~c}-X^CZ_D^dY_{abd}
\end{align}
follows from table \ref{tmf}. Recall that a conformal manifold is
conformally flat whenever the Weyl tensor $W_{abcd}$ and the Cotton
tensor $Y_{abc}$ vanish. We see that the tractor connection is flat if
and only if the conformal manifold is conformally flat.

\subsection{Tractor calculus on conformal submanifolds}

Here we present the key introductory tractor calculus for a conformal
submanifold $\Sigma^m\rightarrow (M^n,\cc)$ of dimension $m\geq3$, following \cite{CuGoS}. The
tractor calculus of conformal submanifolds of dimension $m=1,2$ is
discussed in detail in \cite[Section 3.5]{CuGoS}.

We use middle Latin capital letters $I,J,\dots,K$ for indices of
intrinsic tractor bundles on $\Sigma$. For example, the intrinsic
tractor bundle and metric are $\mathcal{E}^J$ and $h_{JK}$. We write
$D$ for the intrinsic tractor connection. Ambient tractor bundles will
refer to the tractor bundles of $M$ (usually, by context, restricted
to $\Sigma$), and will be adorned with indices from the early part of
the alphabet, $A,B,C,\cdots$.

\subsection{Normal and tangent tractor bundle}

The \lean{normal tractor bundle} $\mathcal{N}^A$ is a subbundle of the
ambient tractor bundle $\mathcal{E}^A$. (More precisely, it is a section of $\mathcal{E}^A|_\Sigma$. But as mentioned we shall omit the explicit denoting of the restriction when it is clear by context.) It is defined for each
$g\in\cc$ as the image of the map
\begin{align*}
    N^A_a:\mathcal{N}^a[-1]&\longrightarrow\mathcal{E}^A\\
    v^a&\mapsto\left(\begin{array}{c}
        0\\
        v^a\\
        H_bv^b
    \end{array}\right)
\end{align*}
This map is independent of the choice $g$. Using the ambient tractor
metric we define the tangent tractor bundle
$\overline{\mathcal{E}}{}^A$ as the orthogonal complement of
$\mathcal{N}^A$ in $\mathcal{E}^A$. This gives the direct sum
\begin{equation*}
    \mathcal{E}^A=\overline{\mathcal{E}}{}^A\oplus\mathcal{N}^A.
\end{equation*}
The projection operators onto each subbundle are written $\Pi^A_B$ and $N^A_B$, and are called the \lean{tangent tractor projector} and \lean{normal tractor projector} respectively.

As each metric $g_{ab}$ in $\cc$ induces a metric $g_{ij}$ on $\Sigma$
it follows that $\cc$ induces a conformal structure
$\overline{\cc}$ on $\Sigma$, and hence
$(\Sigma,\overline{\cc})$ has an intrinsic conformal tractor bundle $\ce^J$ and
connection $D_i$.  There is a conformally invariant isomorphism between
$\overline{\mathcal{E}}{}^A$ and the intrinsic tractor bundle
$\mathcal{E}^J$. This is given in (\cite{CuGoS}, Theorem $3.5$). We
shall write this isomorphism as
$\Pi^A_J:\mathcal{E}^J\rightarrow\overline{\mathcal{E}}{}^A$, and
write $\Pi^J_A:\overline{\mathcal{E}}{}^A\rightarrow\mathcal{E}^J$ for
the inverse.  Note that $\Pi^A_B=\Pi^A_J\Pi^J_B$ and
$N^A_B=N^A_aN^a_B$ where $N^a_A:=N^B_b\bg^{ab}h_{AB}$.

\subsection{Intrinsic and extrinsic tractor connections}\label{CheckedConnetionSection}

Let $Y^J$, $Z^J_j$, and $X^J$ be the intrinsic tractor projectors in a scale $g\in\overline{\cc}$. The intrinsic tractor connection $D$ on $\Sigma$ is given in terms of these projectors by
\begin{align*}
    D_iY^J=&p_i{}^jZ^J_j\\
    D_iZ^J_j=&-\bg_{ij}Y^J-p_{ij}X^J\\
    D_iX^J=&Z^J_i,
\end{align*}
where $p_{ij}$ is the intrinsic Schouten tensor, and we write $D_i$
also for the intrinisic Levi-Civita as well as its coupling to the
intrinsic tractor connection. The {\em checked tractor connection}
$\check{\nabla}$ is a connection on $\mathcal{E}^J$ defined using the
isomorphisms $\Pi^A_J$ and $\Pi^J_A$, and the ambient tractor
connection. This is given by
\begin{equation}\label{CheckedTRACConn}
    \check{\nabla}_iU^J:=\Pi^J_A\nabla_iU^A
\end{equation}
where $U^J\in\Gamma\left(\mathcal{E}^J\right)$ and $U^A:=\Pi^A_JU^J$. It is easily verified that this prescription determines a connection on $\mathcal{E}^J$. The action of $\check{\nabla}$ on the intrinsic tractor projectors is given by
\begin{equation}\label{CheckedProjectors}
\begin{split}
    \check{\nabla}_iY^J=&\check{P}_i{}^jZ^J_j\\
    \check{\nabla}_iZ^J_j=&-\bg_{ij}Y^J-\check{P}_{ij}X^J\\
    \check{\nabla}_iX^J=&Z^J_i,
\end{split}
\end{equation}
where
$\check{P}_{ij}:=\Pi^a_i\Pi^b_jP_{ab}+H_cI\!I^{\circ}{}_{ij}{}^c+\frac{1}{2}H^2\bg_{ij}$
is the {\em Schouten-Fialkow tensor}.
The formulae above
for $\check{\nabla}$ on the intrinsic tractor projectors can be
calculated directly from the definition of the checked tractor
connection, or they can be recovered using the action of the intrinsic
tractor connection on the intrinsic tractor projectors and the
tangential contorsion introduced in the following subsection.

\subsubsection{Tangent tractor contorsion and the Fialkow tensor}

The two tractor connections $D$ and $\check{\nabla}$ on conformal submanifolds of dimension $m\geq3$ are in general not the same. The difference between these connections will produce a conformally invariant (tractor) contorsion, which we call the {\em tangent tractor contorsion}. This is the tractor $\mathbb{S}_j{}^K{}_L$ defined by
\begin{equation*}
    \mathbb{S}_j{}^K{}_LU^L:=D_jU^K-\check{\nabla}_jU^K
\end{equation*}
for $U^J\in\Gamma\left(\mathcal{E}^J\right)$. We know how the connections $D$ and $\check{\nabla}$ act on the submanfold splitting operators, so we can get an explicit formula for $\mathbb{S}_j{}^K{}_L$. This is
\begin{equation*}
    \mathbb{S}_j{}^K{}_L=X^KZ_L^l\mathcal{F}_{jl}-Z^K_kX_L\mathcal{F}_j{}^k.
\end{equation*}
Here $\mathcal{F}_{jk}:=\check{P}_{jk}-p_{jk}$ is a conformally invariant tensor called the {\em Fialkow tensor}. It is given by Equation $3.36$ in \cite{CuGoS}, where they show that
\begin{equation*}
    \mathcal{F}_{jk}=\Pi^a_j\Pi^b_kP_{ab}+H_cI\!I^{\circ}{}_{jk}{}^c+\frac{1}{2}H^2\bg_{jk}-p_{jk}.
\end{equation*}

\subsection{Tractor second fundamental form} 

The tractor second fundamental form $\mathbb{L}_{jK}{}^A$ is the
contorsion between the checked tractor connection and the ambient
tractor connection on sections of $\overline{\mathcal{E}}{}^A$. It is
defined by the equation below, called the {\em tractor Gauss formula}.
\begin{equation*}
    \nabla_jU^A=\Pi^A_K\check{\nabla}_jU^K+\mathbb{L}_{jK}{}^AU^K.
\end{equation*}
Here $U^K\in\Gamma (\mathcal{E}^K )$ and $U^A:=\Pi^A_KU^K$. The tractor second fundamental form can be given in terms of the tangent tractor projector by
\begin{equation}\label{TRACSECFF}
    \mathbb{L}_{jA}{}^B=\Pi^C_A\nabla_j\Pi^B_C.
\end{equation}
Often we will write the equivalent $\mathbb{L}_{jK}{}^B=\Pi^A_K\mathbb{L}_{jA}{}^B$ for the tractor second fundamental form. With respect to a splitting $g\in\overline{\cc}$, (applied to the lower tractor index) $\mathbb{L}_{jK}{}^A$ can be written as
\begin{equation}\label{ExplicitSFF}
    \mathbb{L}_{jK}{}^A=\left(\begin{array}{c}
        0\\
        I\!I^{\circ}{}_{jk}{}^a\\
        -D_jH^a+\Pi^b_jN^a_cP_b{}^c
    \end{array}\right)N^A_a,
\end{equation}
which is proved in \cite[Theorem 3.14]{CuGoS}. The {\em tractor Weingarten formula} is the equation
\begin{equation*}
    \nabla_jV^A=\nabla^{\mathcal{N}}_jV^A-\mathbb{L}_j{}^A{}_BV^B
\end{equation*}
where $V^A\in\Gamma\left(\mathcal{N}^A\right)$ and
$\nabla_j^{\mathcal{N}}V^A:=N^A_B\nabla_jV^B$ is the ambient tractor
connection projected onto the normal tractor bundle, which shall call
the normal-projected tractor connection.
We extend the checked tractor
connection to act on sections of the ambient tractor bundle by
coupling to $\nabla^{\mathcal{N}}$. For such a section $U^A$, this
extension is defined by
\begin{equation}\label{55}
    \check{\nabla}_jU^A:=\Pi^A_J\check{\nabla}_j\left(\Pi^J_BU^B\right)+\nabla_j^{\mathcal{N}}\left(N^A_BU^B\right),
\end{equation}
and an important property of this connection is $\check{\nabla}_j\left(\Pi^A_JU^J\right)=\Pi^A_J\check{\nabla}_jU^J$. Using the tractor Gauss and Weingarten formulae it is not hard to show that
\begin{equation}\label{1tractorGaussEq1}
    \nabla_jU^A=\check{\nabla}_jU^A+\mathbb{L}_{jB}{}^AU^B-\mathbb{L}_j{}^A{}_BU^B
\end{equation}
where $U^A\in\Gamma\left(\mathcal{E}^A\right)$. We call $\check{\nabla}$ in Equation \eqref{55} above the {normal-coupled checked tractor connection}.

The following lemma will be useful for our computations in Section \ref{HigherWillmores}.

\begin{lem}\label{LastNeededReferencedLemma}
    The tractor $N^A_a$ is parallel with respect to the normal-coupled checked tractor connection $\check{\nabla}$, coupled to the conformally invariant connection on the weight one conormal bundle.
\end{lem}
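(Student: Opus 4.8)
The claim is that $N^A_a \in \Gamma(\mathcal{E}^A \otimes \mathcal{N}_a[1])$ satisfies $\check{\nabla}_j N^A_a = 0$, where on the index $A$ we use the normal-coupled checked tractor connection \eqref{55}, and on the lower index $a$ we use the conformally invariant connection on the weight-one conormal bundle $\mathcal{N}_a[1]$. The plan is to verify this by direct computation in a chosen scale $g \in \overline{\cc}$, using the explicit formula for $N^A_a$ given in the definition of the normal tractor bundle, namely $N^A_a$ sends $v^a \mapsto (0, v^a, H_b v^b)$, i.e. in slot notation $N^A_a = Z^A{}_a + X^A H_a$ (with indices interpreted appropriately as living in the weight-one conormal bundle).

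First I would unpack what $\check{\nabla}_j$ does on each piece. Since $N^A_a$ has its upper tractor index $A$ lying in the normal tractor bundle $\mathcal{N}^A$ (by construction $N^A_a$ maps into $\mathcal{N}^A$), the normal-coupled checked connection acts on it via $\nabla^{\mathcal{N}}_j = N^A_B \nabla_j(\cdot)$, the normal-projected ambient tractor connection. So the computation reduces to: apply the ambient tractor connection $\nabla_j$ (restricted to $\Sigma$) to $Z^A{}_a + X^A H_a$, using \eqref{TracConn1}–\eqref{TracConn3} for the tractor projectors together with the Weingarten/Gauss formulae for how $\nabla_j$ acts on the conormal index $a$, then project the result onto $\mathcal{N}^A$ and simultaneously check that the remaining conormal-index terms assemble into the weight-one conormal connection acting on $N^A_a$. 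The key curvature inputs are: $\nabla_j Z^A{}_a = -\delta^a_j{}^{\dots}Y^A - P_j{}^a X^A$ — more carefully, since $a$ is an ambient index restricted to $\Sigma$ and contracted with a conormal vector, the Weingarten formula $\nabla_j v^a = \nabla^\perp_j v^a - \Pi^a_k I\!I_j{}^k{}_b v^b$ will feed in; and $\nabla_j X^A = \bg_{ja}Z^A{}^a$. When these are combined and projected, the terms involving $\Pi^a_k$ (tangential parts) should drop out under $N^A_a$, and the terms involving the mean curvature $H$ and the Schouten tensor should cancel against the $X^A H_a$ piece precisely because $H_b v^b$ sits in the bottom slot of $N^A_a$ by design.

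The main obstacle I anticipate is bookkeeping the weights and the conormal connection correctly: $N^A_a$ is a section of $\mathcal{E}^A \otimes \mathcal{N}_a[1]$, and the statement specifically couples to \emph{the} conformally invariant connection on $\mathcal{N}_a[1]$ — so part of the work is recognizing that the non-tractor-projector terms produced by $\nabla_j$ acting on the lower slot are exactly what is needed to promote the bare $\nabla^\perp$ on $\mathcal{N}_a$ to the correct density-coupled connection, with no leftover terms. A clean way to organize this is: (i) show $N^C_B \nabla_j(\Pi^B_J U^J) = 0$ type identities are already encoded in the tractor Gauss formula and \eqref{55}, so that $\check\nabla_j$ commutes with the normal projection appropriately; (ii) then it suffices to check $\check\nabla_j$ annihilates $N^A_a$ on a single generator, which by conformal invariance of both sides can be done in any convenient scale — e.g. a scale in which $H^a$ is as simple as possible at a point, or better, exploit that $N^A_a$ is manifestly defined independently of the scale (as noted right after its definition) together with the conformal invariance of $\check\nabla$, reducing the whole thing to a one-line slot computation. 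I would present the scale computation as the spine of the proof and remark that the scale-independence of both $N^A_a$ and $\check\nabla$ makes the identity conformally robust.
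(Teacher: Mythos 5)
Your proposal is correct and follows essentially the same route as the paper: apply the ambient tractor connection to $N^A_b v^b$ in a chosen scale using the slot formula and the Weingarten formula, observe that the leftover terms are tangential (tractor-)valued and hence killed by the normal projection built into $\check{\nabla}$, leaving exactly $N^A_b D_j v^b$. The paper simply streamlines the slot computation by working in a minimal scale ($H^a=0$), which is one of the simplifications you yourself suggest.
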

\begin{proof}
We will show that $\check{\nabla}_j\left(N^A_bv^b\right)=N^A_bD_jv^b$,
where $D$ is the normal connection on the (weight $-1$) normal bundle, and
$v^b$ is a section of this bundle. This is sufficient to show that
$N^A_a$ is parallel.
Given an {\em embedded} sbmanifold $\Sigma$ we can always find a metric $g\in \cc$ so that $\Sigma$ is minimal, meaning $H^a=0$, see   \cite{G-aE,CuThesis} or \cite[Proposition 3.1]{CuGoS}. In such a scale $N^A_bv^b$ can
be written as
\begin{equation*}
    N^A_bv^b=\left(\begin{array}{c}
        0\\
        v^a\\
        0
    \end{array}\right).
\end{equation*}
It follows from the definition of the tractor connection, and by the Weingarten formula, that
\begin{equation*}
    \nabla_j\left(N^A_bv^b\right)=\left(\begin{array}{c}
        0\\
        D_jv^a-I\!I_j{}^a{}_bv^b\\
        -P_{cb}\Pi^c_jv^b
    \end{array}\right)=N^A_bD_jv^b-T^A_j,
\end{equation*}
where $T^A_j\in\Gamma\left(\overline{\mathcal{E}}{}^A_i\right)$. Since $\check{\nabla}$ is coupled to the normal
connections on both $\mathcal{N}^a$ and $\mathcal{N}^A$, we must have
$\check{\nabla}_j\left(N^A_bv^b\right)=N^A_B\nabla_j\left(N^B_bv^b\right)$,
and therefore
\begin{equation*}
    \check{\nabla}_j\left(N^A_bv^b\right)=N^A_bD_jv^b.
\end{equation*}
\end{proof}

\subsection{Conformal submanifold structure equations}

The structure equations for conformal submanifolds relate the ambient
tractor curvature to the intrinsic tractor curvature. For these
equations see \cite{CuGoS}, respectively Equations $3.42$, $3.43$, and
$3.44$ there.  For our computations in Section \ref{HigherWillmores}
it is convenient to write these structure equations in terms of the
checked tractor connection, rather than the intrinsic tractor
connection, which is a simple (and simplifying) modification involving
adding terms of the tangent tractor contorsion.

Let $\check{\Omega}_{ijKL}$ be the tractor curvature defined by the
checked tractor connection $\check{\nabla}$, and
$\Omega^{\mathcal{N}}_{ijAB}$ the tractor curvature of the normal
tractor connection. The structure equations for conformal submanifolds
are
\begin{align}
    &\Omega_{ijKL}=\check{\Omega}_{ijKL}-2\mathbb{L}_{[i|K}{}^C\mathbb{L}_{|j]LC}\label{CGE}\\
    &\Omega_{ijAK}N^A_B=2\check{\nabla}_{[i}\mathbb{L}_{j]KB}\label{ConformalCodazzi-Mainardi}\\
    &\Omega_{ijAB}N^A_CN^B_D=\Omega^{\mathcal{N}}_{ijCD}-2\mathbb{L}_{[i|}{}^L{}_C\mathbb{L}_{|j]LD}.\nonumber
\end{align}
We call these respectively the {tractor Gauss Equation}, the {tractor
  Codazzi-Mainardi Equation}, and the {tractor Ricci Equation}.

\section{Higher Willmore energies}\label{HigherWillmores}

Here we construct the $Q$-operator energy, which is defined for
submanifolds of dimension four embedded in conformally flat manifolds
of arbitrary codimension. We also construct here the GJMS-coupled
energies for submanifolds of arbitrary even dimension embedded in
conformally flat manifolds of arbitrary codimension.

In fact these two constructions are intimately linked via the natural
linear differential $Q$-operators of \cite{BrGo03,BrGo05}. We thus
begin the next section \ref{QOperatorEnergySection} by introducing
some key aspects of the $Q$-operators. This link enables a comparison
of the $Q$-operator energy and the GJMS energy in dimension four, as
well as a proof that all these energies are of Willmore-type and these
directions are taken up subsequently.

\subsection[$Q$-Operator Energy]{$\boldsymbol{Q}$-Operator Energy}\label{QOperatorEnergySection} 
Here we use a $Q$-operator to construct a global conformal invariant
for four-dimensional closed submanifolds immersed in a conformally
flat manifold of arbitrary codimension. We will give an explicit
formula for the invariant, and show that it is of Willmore-type.

The $Q$-operators of \cite{BrGo03} are differential operators
(available in even dimensions) that, when applied to closed
differential forms, have a conformal transformation of the same form
as the Branson $Q$-curvature. They strictly generalise the latter and
to give context we here recall some other general facts.  However for
our later constructions we shall only need the $Q$-operator on 1-forms
in even dimensions. We partly follow \cite{Go04} in the discussion
here.

Let $(M^n,\cc)$ be a closed
 conformal manifold of dimension $n$. Write $\mathcal{E}^k$ for the
 bundle of $k$-forms on $M$ and $\mathcal{E}_k:=\mathcal{E}^k[n-2k]$
 for the bundle of $k$-forms of weight $n-2k$. This notation for these
 bundles of $k$-forms comes from the following duality. The pairing
 \begin{align*}
     \Gamma\left(\mathcal{E}^k\right)\times\Gamma\left(\mathcal{E}_k\right)&\longrightarrow\mathbb{R}\\
     (\alpha,\beta)&\mapsto\int_M\boldsymbol{\langle}\alpha,\beta\boldsymbol{\rangle}\bdg
 \end{align*}
 is conformally invariant, where $\bdg$ is the density-valued volume form of the conformal metric $\bg$ and $\boldsymbol{\langle}\cdot,\cdot\boldsymbol{\rangle}$ is the inner-product of $k$-forms defined by the conformal metric. Using this pairing we can define the {\em codifferential} $\delta:\Gamma\left(\mathcal{E}_{k+1}\right)\rightarrow\Gamma\left(\mathcal{E}_k\right)$ as the formal adjoint of the exterior derivative $d$. For sections $\alpha\in\Gamma\left(\mathcal{E}^k\right)$ and $\beta\in\Gamma\left(\mathcal{E}_{k+1}\right)$, $\delta\beta$ is given by
 \begin{equation*}
     \int_M\boldsymbol{\langle}\alpha,\delta\beta\boldsymbol{\rangle}\bdg=\int_M\boldsymbol{\langle}d\alpha,\beta\boldsymbol{\rangle}\bdg.
 \end{equation*}
 By definition the codifferential is conformally invariant on sections of $\mathcal{E}_k$.

 We now specialise to when the dimension $n$ is even. Fix a metric
 $g\in\cc$. The {\em $k^{th}$ $Q$-operator}
 $Q^g_k:\Gamma\left(\mathcal{E}^k\right)\rightarrow\Gamma\left(\mathcal{E}_k\right)$,
 $0\leq k\leq \frac{n}{2}-1$
 is a linear differential operator, first
 constructed  in \cite{BrGo03} (and see  also \cite{Aubry-Guillarmou}) with some low order formulae
 computed explicitly in \cite{BrGo05}, which has the form
 \begin{equation}\label{FormOfQ}
     Q^g_k=\left(d\delta\right)^{n/2-k}+lots,
 \end{equation}
 where $lots$ means lower order combinations of $d$ and $\delta$. $Q^g_k$ is not conformally invariant, but satisfies an interesting transformation property when acting on closed $k$-forms. For a change of metric $g\mapsto\widehat{g}:=e^{2\Upsilon}g$, where $\Upsilon\in C^{\infty}(M)$, this transformation is
 \begin{equation}\label{Qop-t}
     Q^{\hat{g}}_k u=Q^g_k u+ \beta\cdot \delta Q^g_{k+1} d\left(\Upsilon u \right),
 \end{equation}
 where $ u\in\mathcal{E}^k$ is a closed $k$-form, $Q^g_{k+1}$ is the
 $(k+1)^{th}$ $Q$-operator in the scale of $g$ (with $Q^g_\frac{n}{2}
 :=1$), $\beta$ a non-zero constant, and each operator acts on all to its right.

 Recall that Branson's $Q$-curvature \cite{BrOsted,tom-sharp}
 is a natural
scalar curvature on even dimensional Riemannian manifolds $n$ with a conformal
weight $-n$ and a conformal transformation of the form
 \begin{equation}\label{Q-t}
     Q^{\hat{g}}=Q^g+\delta Nd \Upsilon ,
 \end{equation}
 where
 $N:\Gamma\left(\mathcal{E}^1\right)\rightarrow\Gamma\left(\mathcal{E}_1\right)$
 is some natural (or geometric) linear differential operator depending
 on $g$ and again $\delta Nd$ is to be read as a composition of the
 differential operators $\delta$, $N$, and $d$. The $Q$-operators
 generalise the $Q$ curvature, in that the latter arises from $Q_0$
 acting on the constant function 1, and because of corresponding
 transformation formula \eqref{Qop-t}.

We shall say that any
 density $Q^g$ of weight $-n$ is {\em $Q$-like} if it has a conformal
 transformation of the form \eqref{Q-t}.  The
 $k^{th}$ $Q$-operator can be used to construct the density
 $\boldsymbol{\langle} u,Q^g_k w \boldsymbol{\rangle}\in \Gamma (\ce[-n])$ for closed
 $k$-forms $ u,w \in\mathcal{E}^k$, and it is straightforward to
 show that this density is $Q$-like. Thus
 \begin{equation}\label{gp}
\int_M \boldsymbol{\langle} u,Q^g_k w \boldsymbol{\rangle} \bdg
 \end{equation}
is conformally invariant, as oberved in \cite{BrGo03,BrGo05}.

These observations are linked to two constructions of submanifold energies.

\subsubsection[The $Q_1$ operator]{The $\boldsymbol{Q_1}$ operator}

On manifolds of dimension four, the $Q_1$-operator maps closed
$1$-forms to $1$-forms of conformal weight $-2$. Let $(M,\cc)$ be a
conformal manifold of dimension four. For a fixed metric $g\in\cc$ the
operator $Q_1^g$ is given by
\begin{equation}\label{FirstQOperator}
    Q^g_1u_a=-\nabla_a\nabla^bu_b-4P_a{}^bu_b+2Ju_a,
\end{equation}
where $u_a$ is a closed $1$-form, and $\nabla$, $P$, and $J$ depend on
$g$. We have retrieved the formula above for $Q_1$ from the more
general formula for $Q_{\frac{n}{2}-1}$ on manifolds of dimension $n$,
which is computed in \cite{BrGo05}.

We verify here, explicitly, the key conformal property of $Q_1$.
\begin{prop}\label{ChangeQ1ua}
  Let $v_a\in\Gamma\left(\mathcal{E}_a\right)$ 
  be a closed $1$-form on a manifold of dimension four and $Q_1$ the $Q$-operator defined in Equation \eqref{FirstQOperator}. For a conformal transformation $g\mapsto  \hat{g}=e^{2\Upsilon}g$, where $\Upsilon\in C^\infty (M)$, $Q^g_1u_a$ transforms to
    \begin{equation*}
    Q^{\hat{g}}_1v_a=Q^g_1v_a+4\nabla^b\nabla_{[a}\left(\Upsilon v_{b]}\right).
    \end{equation*}
\end{prop}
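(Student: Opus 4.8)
The plan is to verify the identity by a direct computation: substitute the standard conformal transformation laws into the defining formula \eqref{FirstQOperator} for $Q_1$, written in the metric $\hat g = e^{2\Upsilon}g$, and then reorganise the output. The ingredients are: with $\Upsilon_a := \nabla_a\Upsilon$, the Levi-Civita connection on a $1$-form transforms by $\hat\nabla_a\omega_b = \nabla_a\omega_b - \Upsilon_a\omega_b - \Upsilon_b\omega_a + g_{ab}\Upsilon^c\omega_c$; the inverse metric by $\hat g^{ab} = e^{-2\Upsilon}g^{ab}$; the Schouten tensor by \eqref{Ptrans}; and, taking the $\hat g$-trace of \eqref{Ptrans} in dimension four, $\hat J = e^{-2\Upsilon}(J - \Delta\Upsilon - \Upsilon_c\Upsilon^c)$. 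I would use that $v$ is closed throughout, in the form $\nabla_a v_b = \nabla_b v_a$ (equivalently $\nabla_{[a}v_{b]} = 0$), together with the symmetry of the Hessian $\nabla_a\Upsilon_b = \nabla_b\Upsilon_a$.

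First I would handle the divergence term. Since $\hat\nabla^b v_b = \hat g^{bc}\hat\nabla_c v_b = e^{-2\Upsilon}\bigl(\nabla^b v_b + (n-2)\Upsilon^b v_b\bigr)$, with $n-2 = 2$ as $n = 4$, and since this is a scalar on which $\hat\nabla_a$ acts as $\nabla_a$, expanding $-\hat\nabla_a\hat\nabla^b v_b$ produces the leading term $-\nabla_a\nabla^b v_b$ together with terms of the three shapes $\Upsilon(\nabla v)$, $(\nabla\Upsilon)v$ and $\Upsilon\Upsilon v$. In parallel, $-4\hat{P}_a{}^b v_b$ yields $-4P_a{}^b v_b$ plus $(\nabla\Upsilon)v$ and $\Upsilon\Upsilon v$ contributions from \eqref{Ptrans}, and $2\hat J v_a$ yields $2Jv_a$ plus $(\Delta\Upsilon)v$ and $\Upsilon\Upsilon v$ contributions. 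Collecting: the three curvature-free pieces reassemble into $Q^g_1 v_a$; one then checks that every $\Upsilon\Upsilon v$ term cancels (this is the single point where the numerical coefficients $-4$, $+2$ in \eqref{FirstQOperator} and the value $n = 4$ genuinely conspire); and, after using $\nabla_a v_b = \nabla_b v_a$, the surviving terms are exactly $2(\nabla_a\Upsilon^b)v_b + 2\Upsilon_a\nabla^b v_b - 2\Upsilon^b\nabla_b v_a - 2(\Delta\Upsilon)v_a$.

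Finally I would identify this surviving expression with $4\nabla^b\nabla_{[a}(\Upsilon v_{b]})$: expanding the latter gives $2\nabla^b\nabla_a(\Upsilon v_b) - 2\Delta(\Upsilon v_a)$, and the two terms in which both derivatives land on $v$ cancel once $\nabla_a v_b = \nabla_b v_a$ is used (so that $\nabla^b\nabla_a v_b = \nabla^b\nabla_b v_a$ with no commutator), leaving precisely the four terms above. As an aside, one can instead observe that $4\nabla^b\nabla_{[a}(\Upsilon v_{b]})$ equals $(\delta\,d(\Upsilon v))_a$ up to a nonzero constant, so the proposition is the $k = n/2 - 1$ instance of the general transformation rule \eqref{Qop-t} of \cite{BrGo03,BrGo05} with $Q^g_{n/2} = 1$; but the content of stating it here is the self-contained check just described. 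I would also keep track of the uniform density factor $e^{-2\Upsilon}$ multiplying the whole right-hand side in the computation, which is absorbed by the conformal weight carried by $v_a$ and so does not appear in the displayed identity.

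The only genuine obstacle is organisational: there are on the order of ten $\Upsilon$-dependent monomials to carry, and the proposition amounts to seeing that all those quadratic in $\Upsilon$ drop out while the remainder packages into a single exact $1$-form. No curvature identity beyond \eqref{Ptrans} is needed, because $v$ is closed and $\nabla^b v_b$ is a scalar, so two covariant derivatives are never commuted on $v$ and no Riemann term is ever produced.
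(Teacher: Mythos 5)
Your proposal is correct and follows essentially the same route as the paper's proof: direct substitution of the transformation laws for $\nabla$, $P_{ab}$ and $J$ into \eqref{FirstQOperator}, checking that all terms quadratic in $\Upsilon$ cancel, and repackaging the surviving linear terms into $4\nabla^b\nabla_{[a}\left(\Upsilon v_{b]}\right)$ using closedness of $v$ and symmetry of the Hessian. The only cosmetic difference is that you expand the target expression and match it to the remainder, whereas the paper assembles the remainder into $4\nabla^b\left(\Upsilon_{[a}v_{b]}\right)$ and then identifies it with the stated exact form.
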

\begin{proof}
    The conformal transformations below are with respect to $g\mapsto e^{2\Upsilon}g$. Recall that the conformal transformations of the Schouten tensor $P_{ab}$ and its trace $J$ are
    \begin{align*}
        P_{ab}\mapsto&P_{ab}-\nabla_a\Upsilon_b+\Upsilon_a\Upsilon_b-\frac{1}{2}g_{ab}\Upsilon^2\\
        J\mapsto&J-\nabla_c\Upsilon^c+\left(1-\frac{n}{2}\right)\Upsilon^2.
    \end{align*}
    By explicit computation is straightforward to find that
    \begin{equation*}
        \nabla^bv_b\mapsto\nabla^bv_b+(n-2)\Upsilon^bv_b,
    \end{equation*}
    so, setting $n=4$, the first term $-\nabla_a\nabla^bv_b$ of $Q_1v_a$ in Equation \eqref{FirstQOperator} transforms as
    \begin{align*}
        -\nabla_a\nabla^bv_b\mapsto&-\nabla_a\left(\nabla^bv_b+2\Upsilon^bv_b\right)+2\Upsilon_a\left(\nabla^bv_b+2\Upsilon^bv_b\right)\\
        \mapsto&-\nabla_a\nabla^bv_b-2\left(\nabla_a\Upsilon^b\right)v_b-2\Upsilon^b\nabla_av_b+2\Upsilon_a\nabla^bv_b+4\Upsilon_a\Upsilon^bv_b.
    \end{align*}
    Finally, using that $\Upsilon_a$ and $v_a$ are closed, we can
    write the conformal transformation of $Q_1v_a$ in the compact form
    given above. This is seen as follows:
    \begin{align*}
        Q^{\hat{g}}_1v_a=&Q^g_1v_a-2\left(\nabla_a\Upsilon^b\right)v_b-2\Upsilon^b\nabla_av_b+2\Upsilon_a\nabla^bv_b+4\Upsilon_a\Upsilon^bv_b\\
        &+4\left(\nabla_a\Upsilon^b\right)v_b-4\Upsilon_a\Upsilon^bv_b+2\Upsilon^2v_a\\
        &-2\left(\nabla_b\Upsilon^b\right)v_a-2\Upsilon^2v_a\\
        =&Q^g_1v_a+2\left(\nabla^b\Upsilon_a\right)v_b-2\Upsilon_b\nabla^bv_a+2\Upsilon_a\nabla^bv_b-2\left(\nabla^b\Upsilon_b\right)v_a\\
        =&Q^g_1v_a+2\nabla^b\left(\Upsilon_av_b\right)-2\nabla^b\left(\Upsilon_bv_a\right)\\
        =&Q^g_1v_a+4\nabla^b\left(\Upsilon_{[a}v_{b]}\right)\\
        =&Q^g_1v_a+4\nabla^b\nabla_{[a}\left(\Upsilon v_{b]}\right).
    \end{align*}
\end{proof}

Part of the importance of this transformation property is that it
enables the global pairing of closed 1-forms as in \eqref{gp}. For clairity 
we also verify this explicitly.
\begin{corr}\label{pair-c}
    On a manifold $M$ of dimension four let $Q_1$ be the $Q$-operator given in Equation \eqref{FirstQOperator}, and $u_a$, $v_a$ be two closed $1$-forms. Then the conformal transformation of $Q_1(u,v)$ is
    \begin{equation}\label{QlikeOp}
        Q^{\hat{g}}_1(u,v)=Q^g_1(u,v)+4\nabla^a\left(u^bv_{[a}\nabla_{b]}\Upsilon\right),
    \end{equation}
    and if $M$ is closed then 
    $$
\int_M Q^g_1(u,v) dg
$$
is conformally invariant. 
\end{corr}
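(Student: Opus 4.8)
The plan is to obtain the pointwise transformation formula \eqref{QlikeOp} by contracting the transformation law of Proposition~\ref{ChangeQ1ua} against the closed $1$-form $u$, and then to deduce conformal invariance of the integral by observing that the correction term is an exact divergence and applying Stokes' theorem on the closed manifold $M$.

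First I would use that the pairing $Q^g_1(u,v)=\boldsymbol{\langle} u,Q^g_1 v\boldsymbol{\rangle}$ is built from the inner product of $1$-forms defined by the conformal metric $\bg$, which is conformally invariant; hence the entire scale-dependence of the right-hand side sits in $Q^g_1 v$. Applying Proposition~\ref{ChangeQ1ua} to $v$ and contracting with $u^a$ therefore gives
\begin{equation*}
    Q^{\hat g}_1(u,v)=Q^g_1(u,v)+4\,u^a\nabla^b\nabla_{[a}\left(\Upsilon v_{b]}\right).
\end{equation*}
Since $v$ is closed, $\nabla_{[a}(\Upsilon v_{b]})=(\nabla_{[a}\Upsilon)\,v_{b]}$, which is skew in $a,b$; moving $u^a$ inside the derivative produces $4\nabla^b\left(u^a(\nabla_{[a}\Upsilon)v_{b]}\right)$ together with a term proportional to $(\nabla^b u^a)(\nabla_{[a}\Upsilon)v_{b]}$, and the latter vanishes because $u$ is closed (so $\nabla^{[b}u^{a]}=0$) while the tensor it contracts against is skew. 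Relabelling indices then yields exactly the correction $4\nabla^a\left(u^bv_{[a}\nabla_{b]}\Upsilon\right)$ of \eqref{QlikeOp}.

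For the global statement: the correction in \eqref{QlikeOp} is the divergence of the (suitably weighted) $1$-form $\xi_a:=u^bv_{[a}\nabla_{b]}\Upsilon$, so in any fixed scale $g$ it is an ordinary divergence, and hence $\int_M Q^{\hat g}_1(u,v)\,\bdg=\int_M Q^g_1(u,v)\,\bdg$ by Stokes' theorem; that is, the integral is independent of the choice of metric in $\cc$. Equivalently, and more conceptually, one can argue from the general shape of the $Q$-operator transformation: with $n=4$ one has $Q^g_2=1$, so $Q^{\hat g}_1 v=Q^g_1 v+\beta\,\delta d(\Upsilon v)$ for a nonzero constant $\beta$, and then $\int_M\boldsymbol{\langle} u,\delta d(\Upsilon v)\boldsymbol{\rangle}\,\bdg=\int_M\boldsymbol{\langle} du,d(\Upsilon v)\boldsymbol{\rangle}\,\bdg=0$ by adjointness of $d$ and $\delta$ together with closedness of $u$ --- this is precisely the observation underlying \eqref{gp}.

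I do not expect a genuine obstacle here; the only care needed is bookkeeping of which objects are conformally invariant (the conformal metric $\bg$, the induced inner product of forms, the exterior derivative) versus scale-dependent ($Q^g_1$ and the Levi-Civita connection $\nabla$), and ensuring the closedness hypotheses on \emph{both} $u$ and $v$ are invoked in the right places --- closedness of $v$ to reduce $\nabla_{[a}(\Upsilon v_{b]})$, and closedness of $u$ to discard the $(\nabla^{[b}u^{a]})$ term and, in the global argument, to kill $\boldsymbol{\langle} du,d(\Upsilon v)\boldsymbol{\rangle}$.
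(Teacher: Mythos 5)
Your proposal is correct and follows essentially the same route as the paper: contract the transformation law of Proposition \ref{ChangeQ1ua} with $u^a$, apply Leibniz, kill the $(\nabla^{[b}u^{a]})$ term by closedness of $u$, and note the correction is a divergence while $Q^g_1(u,v)$ has weight $-4$, so the integral over closed $M$ is invariant. The only cosmetic difference is the order in which you invoke closedness of $v$ versus the Leibniz step, plus the extra (correct) remark identifying the mechanism with the adjointness of $d$ and $\delta$ underlying \eqref{gp}.
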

\begin{proof}
    From Proposition \ref{ChangeQ1ua} above we know that
    \begin{equation*}
        u^aQ^{\hat{g}}_1v_a=u^aQ^g_1v_a+4u^a\nabla^b\nabla_{[a}\left(\Upsilon v_{b]}\right).
    \end{equation*}
    Using Leibniz on the second term we find that
    \begin{align*}
        u^a\nabla^b\nabla_{[a}\left(\Upsilon v_{b]}\right)=&\nabla^b\left(u^a\nabla_{[a}\left(\Upsilon v_{b]}\right)\right)-\left(\nabla^bu^a\right)\nabla_{[a}\left(\Upsilon v_{b]}\right)\\
        =&\nabla^a\left(u^bv_{[a}\nabla_{b]}\Upsilon\right)-\left(\nabla^{[b}u^{a]}\right)\nabla_a\left(\Upsilon v_b\right).
    \end{align*}
    Since $u_a$ is closed, the result follows. The final claim follows
    as $Q^g_1(u,v)$ takes values in the bundle $\mathcal{E}[-4]$.
\end{proof}

\subsubsection{A Global invariant for four-submanifolds}
Let $\Sigma^4\rightarrow M$ be a closed conformal submanifold of
dimension four, where $M^n$, $n\geq 5$, is a equipped with a (locally) flat
conformal structure $\cc$. There is a canonical conformally invariant
tractor-valued natural $1$-form on the submanifold, namely the tractor second
fundamental form $\mathbb{L}_{jA}{}^B$ defined in Equation
\eqref{TRACSECFF}. We use $\mathbb{L}_{jA}{}^B$ and the
$Q$-operator theory developed above to construct a $Q$-like density,
and thus a global conformal invariant on $\Sigma$.

Recall the tractor Codazzi-Mainardi Equation \eqref{ConformalCodazzi-Mainardi}:
\begin{equation*}
    \Omega_{ijAK}N^A_B=2\check{\nabla}_{[i}\mathbb{L}_{j]KB}.
\end{equation*}
Since $M$ is conformally flat, the ambient Weyl tensor and ambient Cotton tensor vanish. By Equation \eqref{AmbientCurvature2} this is equivalent to the ambient tractor curvature $\Omega_{abCD}$ vanishing, so we get that
\begin{equation*}
    \check{\nabla}_{[i}\mathbb{L}_{j]A}{}^B=0.
\end{equation*}
In other words, the tractor second fundamental form
$\mathbb{L}_{jA}{}^B$ is closed with respect to the normal-coupled
checked tractor connection $\check{\nabla}$. We shall say that it is
{\em $d^{\check{\nabla}}$-closed}. The $Q$-operator $Q_1$ for $\Sigma$
is then defined as in Equation \eqref{FirstQOperator}, except using
the intrinsic data of the submanifold: the intrinsic Schouten tensor
$p_{jk}$ and its trace $\jmath$. Moreover, we couple also the
connection operators in the formula for $Q_1$ to the checked tractor
connection, so that the resulting $Q_1:= Q_1^{\check{\nabla}}$ may act
on $d^{\check{\nabla}}$-closed tractor-valued $1$-forms. Since
$\check{\nabla}$ is an invariant connection, and no commutation of
derivatives was used the proof of Proposition \ref{ChangeQ1ua}, the resulting $Q_1$ operator
transforms as in Proposition
\ref{ChangeQ1ua}, but now acting on tractor-valued
$d^{\check{\nabla}}$-closed $1$-forms. This is a special case of the result \cite[{\bf Theorem 5.3}]{BrGo05}.

Using the above tools We define a canonical $Q$-like density for our
conformal submanifold: Let $\bar{g}$ be a fixed metric in the intrinsic
conformal class of $\Sigma$.
The $Q$-like density is
\begin{equation*}
    Q^{\bar{g}}_1\left(\mathbb{L},\mathbb{L}\right):=\mathbb{L}^{jA}{}_B\left(Q^{\bar{g}}_1\mathbb{L}\right){}_{jA}{}^B,
\end{equation*}
where tractor indices are raised and lowered using the ambient tractor
metric and tensor indices are raised and lowered (as usual) using the
induced conformal metric on $\Sigma$.

We are now ready to prove Theorem \ref{QoperatorEnergyThm}.
Note that, once again, no commuting of derivatives is used in the
Proof of Corollarly \ref{pair-c}. Thus it follows that
$Q^{\bar{g}}\left(\mathbb{L},\mathbb{L}\right)$ satisfies Equation
\eqref{QlikeOp}, meaning that under a conformal transformation
$\bar{g}\mapsto \hat{\bar{g}}=e^{2\Upsilon}\bar{g}$ we have
$$
Q^{\widehat{\bar{g}}}_1\left(\mathbb{L},\mathbb{L}\right)= Q^{\bar{g}}_1\left(\mathbb{L},\mathbb{L}\right) + 2 \nabla^i \left( \mathbb{L}^{jA}{}_B \mathbb{L}{}_{iA}{}^B \nabla_j \Upsilon -   \mathbb{L}^{jA}{}_B \mathbb{L}{}_{jA}{}^B \nabla_i \Upsilon\right) .
$$
Since also $Q^{\bar{g}}_1\left(\mathbb{L},\mathbb{L}\right) $ is a section of $\ce[-4]$ we have the following. 

\begin{prop}\label{Q-prop}
  On a an immersed closed 4-submanifold $\Sigma$  in a conformally flat manifold $(M^n,\cc)$, $n\geq 5$ the quantity
  \begin{equation*}
    \mathcal{E}:=\int_{\Sigma}Q^{\bar{g}}_1\left(\mathbb{L},\mathbb{L}\right)\bdg,
  \end{equation*}
  is an invariant.
\end{prop}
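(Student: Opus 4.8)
The plan is to assemble the claim from the two conformal-invariance facts already established in the excerpt, namely the transformation formula for the paired $Q$-operator density and the fact that the density has weight $-n=-4$, which is exactly the weight needed for $\int_\Sigma (\cdot)\,\bdg$ to be a conformal pairing on the $4$-manifold $\Sigma$. Concretely, I would begin by recalling that $\mathbb{L}_{jA}{}^B$ is $d^{\check\nabla}$-closed (since $(M,\cc)$ is conformally flat, so the ambient tractor curvature $\Omega_{abCD}$ vanishes, and the tractor Codazzi--Mainardi equation \eqref{ConformalCodazzi-Mainardi} then forces $\check\nabla_{[i}\mathbb{L}_{j]A}{}^B=0$), and that the checked tractor connection $\check\nabla$ is a conformally invariant connection. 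Hence the intrinsically-built operator $Q_1^{\check\nabla}$, obtained from \eqref{FirstQOperator} by replacing the intrinsic Schouten data appropriately and coupling all derivatives to $\check\nabla$, acts on $d^{\check\nabla}$-closed tractor-valued $1$-forms, and — since no commutation of derivatives was used in the proof of Proposition \ref{ChangeQ1ua} — it still obeys the transformation law of that proposition.

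Next I would spell out the paired density $Q^{\bar g}_1(\mathbb{L},\mathbb{L}):=\mathbb{L}^{jA}{}_B\,(Q^{\bar g}_1\mathbb{L})_{jA}{}^B$ and invoke the computation already displayed just before the statement: under $\bar g\mapsto \widehat{\bar g}=e^{2\Upsilon}\bar g$,
\begin{equation*}
Q^{\widehat{\bar g}}_1(\mathbb{L},\mathbb{L})=Q^{\bar g}_1(\mathbb{L},\mathbb{L})+2\nabla^i\!\left(\mathbb{L}^{jA}{}_B\mathbb{L}_{iA}{}^B\nabla_j\Upsilon-\mathbb{L}^{jA}{}_B\mathbb{L}_{jA}{}^B\nabla_i\Upsilon\right),
\end{equation*}
which is the submanifold analogue of \eqref{QlikeOp}; this follows from Proposition \ref{ChangeQ1ua} and the Leibniz manipulation in the proof of Corollary \ref{pair-c}, again using only that no derivatives were commuted there and that $\mathbb{L}$ is $d^{\check\nabla}$-closed in the relevant index. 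The key structural point is that the correction term is a total divergence of a density-valued vector field on $\Sigma$.

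I would then integrate over the closed $4$-manifold $\Sigma$. Because $Q^{\bar g}_1(\mathbb{L},\mathbb{L})$ is a section of $\ce[-4]$ on a $4$-dimensional conformal manifold, $Q^{\bar g}_1(\mathbb{L},\mathbb{L})\,\bdg$ is a genuine (weight-$0$, i.e.\ ordinary) volume form on $\Sigma$, so the integral is well-defined independently of scale. Applying the divergence theorem — legitimate since $\Sigma$ is closed and the correction term is $\nabla^i$ of a section of $\ce^i[-2]$, which again pairs with $\bdg$ to give an exact $3$-form — the correction integrates to zero, so $\int_\Sigma Q^{\widehat{\bar g}}_1(\mathbb{L},\mathbb{L})\,\bdg=\int_\Sigma Q^{\bar g}_1(\mathbb{L},\mathbb{L})\,\bdg$. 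Since $\Upsilon$ was arbitrary and any two metrics in $\overline{\cc}$ differ by such a conformal factor, $\mathcal{E}$ is independent of the choice of $\bar g$, i.e.\ conformally invariant; and it is manifestly a smooth invariant of the immersed submanifold since every ingredient ($\mathbb{L}$, $\check\nabla$, $p_{jk}$, $\jmath$, $\bdg$) is. This establishes the proposition.

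The only real subtlety — and the step I would be most careful about — is the transfer of Proposition \ref{ChangeQ1ua} and Corollary \ref{pair-c} from the scalar/form setting to the tractor-coupled setting: one must be sure that coupling to $\check\nabla$ does not introduce curvature terms, and this is exactly where $d^{\check\nabla}$-closedness of $\mathbb{L}$ and the absence of any derivative commutation in those earlier proofs are used (as noted in the text, this is also a special case of \cite[Theorem 5.3]{BrGo05}). Everything else is bookkeeping with conformal weights and an application of Stokes's theorem on the closed manifold $\Sigma$.
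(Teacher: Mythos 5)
Your proposal is correct and follows essentially the same route as the paper: $d^{\check\nabla}$-closedness of $\mathbb{L}$ from conformal flatness plus the tractor Codazzi--Mainardi equation, transfer of the transformation laws of Proposition \ref{ChangeQ1ua} and Corollary \ref{pair-c} to the $\check\nabla$-coupled setting (justified exactly as the paper does, by the absence of derivative commutation and by \cite[Theorem 5.3]{BrGo05}), and then the weight count together with the divergence theorem on the closed $\Sigma$. The only nitpick is a minor weight-bookkeeping slip in describing the divergence term (the vector field being diverged sits in $\ce^i[-4]$ once indices are raised, not $\ce^i[-2]$), which does not affect the argument.
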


From the
definition of $Q_1$ an explicit formula for
$Q_1^{\bar{g}}\left(\mathbb{L},\mathbb{L}\right)$ with respect to some fixed
intrinsic metric $\bar{g}$ is
\begin{equation*}
    Q^{\bar{g}}_1\left(\mathbb{L},\mathbb{L}\right)=\mathbb{L}^{jA}{}_B\left(-\check{\nabla}_j\check{\nabla}^k\mathbb{L}_{kA}{}^B-4p_j{}^k\mathbb{L}_{kA}{}^B+2\jmath\mathbb{L}_{jA}{}^B\right).
\end{equation*}
Since $\Sigma$ is closed we can use integration by parts so that
\begin{equation*}
    \mathcal{E}=\int_{\Sigma}\left(\left(\check{\nabla}_j\mathbb{L}^{jA}{}_B\right)\check{\nabla}^k\mathbb{L}_{kA}{}^B-4p_j{}^k\mathbb{L}^{jA}{}_B\mathbb{L}_{kA}{}^B+2\jmath\mathbb{L}^{jA}{}_B\mathbb{L}_{jA}{}^B\right)\bdg,
\end{equation*}
and this gives Equation \eqref{SecondGWE}. We will prove in Section \ref{GJMS is Wt}  that $\mathcal{E}$ is of Willmore-type.

\subsection{GJMS Energy}\label{GJMS Energy section} The GJMS operators \cite{GJMS} are conformally invariant linear differential operators defined on densities. For conformal manifolds of even dimension $n$, the {\em $k^{th}$ GJMS operator}
$$P_{2k}:\Gamma\left(\mathcal{E}\left[k-\frac{n}{2}\right]\right)\longrightarrow\Gamma\left(\mathcal{E}\left[-k-\frac{n}{2}\right]\right), $$
where $k\in \mathbb{Z}_{\geq 1}$, and satisfies $2k\leq n$ if $n$ is even,   takes the form
\begin{equation*}
    P_{2k}=\Delta^k+\text{lower order derivatives},
\end{equation*}
where $\Delta$ is the Laplacian of the Levi-Civita connection in some
scale. The {\em critical GJMS operator} is the operator
$P_n:\Gamma\left(\mathcal{E}\right)\rightarrow\Gamma\left(\mathcal{E}[-n]\right)$.

For a conformal submanifold $\Sigma^m\rightarrow M^n$ with $m$ even
and $(M,\cc)$ conformally flat, write $P_m$ for the intrinsic critical
GJMS operator on $\Sigma$. We next, and henceforth, use the same notation (i.e. $P_m$)
to mean the coupling of $P_m$ to the ambient tractor connection. This means
that in an explicit formula for $P_m$ in terms of the Levi-Civita
connection of the submanifold (and its curvatures -- see
e.g. \cite{GoPe02,GJMS} for examples) we replace each occurrence of an
intrinsic Levi-Civita connection by its coupling to the ambient
tractor connection. 
\begin{prop}\label{inv-p}
    Let $\Sigma\rightarrow M$ be a conformal submanifold of dimension
    $m$, where $(M,\cc)$ is conformally flat. Then the intrinsic
    critical GJMS operator $P_m$, coupled to the ambient tractor
    connection, is conformally invariant on ambient tractor fields.
\end{prop}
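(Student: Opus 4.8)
The plan is to deduce conformal invariance of the coupled operator from that of the \emph{intrinsic} critical GJMS operator on densities, by localizing and trivializing the flat ambient tractor bundle. First I would record the structural fact that does the work: since $(M,\cc)$ is conformally flat, Equation~\eqref{AmbientCurvature2} and the remark following it give that the ambient tractor curvature vanishes, so the pullback $\nabla$ to $\Sigma$ of the ambient tractor connection is flat; consequently the connection $D$ obtained by coupling the intrinsic Levi-Civita connection of $\Sigma$ to $\nabla$ has $[D_i,D_j]$ equal to the intrinsic Riemann curvature of $\Sigma$ alone, the would-be tractor contribution $\Omega_{ij}$ being zero.

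Next, since the asserted identity $P_m^{\widehat{\bar g}}=P_m^{\bar g}$ (for $\widehat{\bar g}=e^{2\Upsilon}\bar g$) is an equality of differential operators, hence a local statement, it suffices to verify it on a simply connected open set $U\subseteq\Sigma$. On such $U$ the flat connection $\nabla$ admits a parallel frame $\{\mathsf e_\alpha{}^A\}$ with $\nabla_i\mathsf e_\alpha{}^A=0$, and --- the crucial point --- this frame does not depend on the choice of metric in $\overline{\cc}$ because the tractor connection is conformally invariant. Writing an ambient tractor field as $\phi^A=\phi^\alpha\mathsf e_\alpha{}^A$ with $\phi^\alpha\in C^\infty(U)$, the coupled connection acts componentwise, $D_i\phi^A=(D_i\phi^\alpha)\,\mathsf e_\alpha{}^A$, and likewise for every iterate of $D$; since the formula defining $P_m$ is a universal expression in iterated coupled Levi-Civita derivatives with coefficients polynomial in the intrinsic curvature of $\Sigma$, this gives $P_m\phi^A=(P_m\phi^\alpha)\,\mathsf e_\alpha{}^A$, where on the right $P_m$ is the ordinary intrinsic critical GJMS operator acting on functions. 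This operator is conformally invariant on $\ce[0]$, and the frame $\{\mathsf e_\alpha{}^A\}$ is unchanged under $\bar g\mapsto e^{2\Upsilon}\bar g$, so $P_m^{\widehat{\bar g}}\phi^A=P_m^{\bar g}\phi^A$ on $U$, and hence on all of $\Sigma$.

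The delicate point, and the real content of the proposition, is the claim that the coupled $P_m$ acts componentwise in a parallel frame, equivalently that the coupling introduces no correction terms. For a higher-order conformally invariant operator, coupling to an arbitrary connection generally destroys conformal invariance and must be repaired by terms built from the curvature of that connection; here every such potential correction is proportional to the ambient tractor curvature $\Omega$, which vanishes, and this is also why the coupled operator is independent of which explicit formula for $P_m$ one uses (two such formulae differ by curvature identities whose coupled forms differ only by terms in $\Omega$). An alternative to the trivialization argument is to rerun the usual conformal-variation proof of invariance of the intrinsic $P_m$ with each Levi-Civita derivative replaced by its coupling to $\nabla$, checking that the only new terms that arise are proportional to $\Omega$ and hence vanish; the trivialization route just makes this bookkeeping automatic.
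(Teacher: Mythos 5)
Your proposal is correct and follows essentially the same route as the paper: both arguments localize to a contractible/simply connected set, trivialize the flat ambient tractor bundle by a parallel frame (which is conformally invariant since the tractor connection is), observe that the coupled $P_m$ then acts componentwise, and reduce to the known invariance of the intrinsic critical GJMS operator on functions. Your closing remarks about curvature corrections and the alternative ``rerun the invariance calculation'' argument also match the paper's Remark~\ref{k-ob}.
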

\begin{proof}
  Let is prove it first for the case of $P_m$ acting on sections of the
  standard tractor bundle $\mathcal{E}^A$ (along $\Sigma$). 
    Let $U^A\in\mathcal{E}^A$ be an arbitrary ambient tractor field. We show first that $P_mU^A$ is conformally invariant on an arbitrary contractible open subset of $\Sigma$ in $M$.

    Since $M$ is conformally flat the curvature of the ambient tractor
    connection vanishes. This implies that on any contractible open
    subset $\mathcal{U}$ of a point $p\in \Sigma$ in $M$ there exists
    a parallel orthonormal basis $\left(E_1^A,\dots,E_{n+2}^A\right)$
    of the ambient tractor bundle restricted to $\mathcal{U}$. Write
    $U^A$ in this basis as
    \begin{equation*}
        U^A=f_1E_1^A+\cdots+f_{n+2}E{}_{n+2}^A
    \end{equation*}
    for smooth functions $f_1,\dots,f_{n+2}$ on $\mathcal{U}$.
   
    Given any formula for $P_m$, coupled to the ambient tractor
    connection, we find that $P_mU^A$ admits  the form
    \begin{equation*}
        P_mU^A=\left(P_mf_1\right)E_1^A+\cdots+\left(P_mf_{n+2}\right)E_{n+2}^A\in\left.\mathcal{E}^A[-m]\right|_{\mathcal{U}\cap \Sigma}.
    \end{equation*}
    Since $P_mf_i$ is conformally invariant for all $i$, it follows that $P_mU^A$ is conformally invariant on any contractible open subset $\mathcal{U}$ of $\Sigma$ in $M$. We conclude that $P_mU^A$ must be conformally invariant everywhere on $\Sigma$. 

This argument extends in the obvious way to $P_m$  acting on ambient tractors of higher rank.
\end{proof}

\begin{rem}\label{k-ob}
An alternative proof of Proposition \ref{inv-p} is simply to observe
that any calculation that verifies the conformal invariance of a
formula for $P_m$ when acting on functions is formally unchanged if instead we replace $P_m$ by its
coupling to a flat connection.

Note that for the approach here (and for Proposition \ref{Q-prop}), the fact that the
ambient tractor connection is flat is needed, as there is not a
formula for the dimension order GJMS operators that couples to a
general non-flat connection. It is straightforward to show that if
there were then we be able to use the tractor connection to construct
natural intrinsic conformal Laplacian power operators of order greater
than than the even dimension, in contradiction to
\cite{Graham-non,Gover-Hirachi-non}.
\end{rem}

It follows from Proposition \ref{inv-p} that, along an $m$-submanifold
$\Sigma$, the tractor $P_mN^B_A$ is well-defined and conformally
invariant with weight $-m$, where
$N^A_B\in\Gamma\left(\mathcal{N}^A_B\right)\subset\Gamma\left(\mathcal{E}^A_B\right)$
is the normal tractor projector, and thus $N^A_BP_mN^B_A$ is a
conformally invariant density of the same weight. This is the
appropriate weight to cancel with the volume form density $\bdg$. So we have the following result.
\begin{prop}\label{tE-prop}
  Let $\Sigma^m\rightarrow M$ be a closed submanifold of even
    dimension $m$ immersed in a conformally flat Riemannian manifold
    $(M,\cc)$ of dimension $n\geq m+1$. There is a conformally invariant
    $\Tilde{\mathcal{E}}$ on $\Sigma$  defined by
    \begin{equation}\label{GE}
        \Tilde{\mathcal{E}}:=\int_{\Sigma}N^A_BP_mN^B_A\bdg \, .
    \end{equation}
  \end{prop}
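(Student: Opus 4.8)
The plan is to show that the integrand $N^A_B P_m N^B_A$ is a well-defined conformally invariant density of weight $-m$ on $\Sigma$, so that integrating it against $\bdg$ (which carries weight $+m$ in dimension $m$) produces a genuine real number independent of the choice of representative metric. Almost all of the substantive work has already been done: Proposition \ref{inv-p} establishes that $P_m$, coupled to the flat ambient tractor connection, is a conformally invariant operator when acting on ambient tractor fields of any rank, and in particular on the rank-$(1,1)$ tractor field $N^B_A$.

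First I would note that the normal tractor projector $N^B_A \in \Gamma(\mathcal{E}^B_A)$ is itself a conformally invariant tractor (as recorded in Section \ref{conventions} and the discussion of the normal/tangent tractor decomposition): it is defined by a splitting-independent map, so it is the same tractor field regardless of the chosen scale $g \in \cc$. Applying the conformally invariant operator $P_m$ of Proposition \ref{inv-p} to it, we obtain $P_m N^B_A \in \Gamma(\mathcal{E}^B_A[-m])$, again a conformally invariant tractor field; the weight $-m$ is forced by the mapping property of the critical GJMS operator on densities, which lowers weight by $m$. Then I would contract with $N^A_B$ using the ambient tractor metric to raise/lower tractor indices; since $N^A_B$ is invariant and the tractor metric $h_{AB}$ is conformally invariant and parallel, the full contraction $N^A_B P_m N^B_A$ is a conformally invariant density of weight $-m$ on $\Sigma$.

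Next I would observe that the density-valued volume form $\bdg = dv^\Sigma_{\bar{\bg}}$ of the intrinsic conformal metric on the $m$-dimensional submanifold is an invariantly defined section of $\ce[m]$ (the conformal volume form in dimension $m$ has weight $m$, by the very definition \eqref{cdensities} of conformal densities). Hence the product $N^A_B P_m N^B_A \, \bdg$ is a conformally invariant section of $\ce[0]$ on $\Sigma$, i.e.\ an honest density, and since $\Sigma$ is closed, $\int_\Sigma N^A_B P_m N^B_A \, \bdg$ converges and is a well-defined real number. Conformal invariance of the integral is then immediate from the invariance of the integrand. This is exactly the content of the statement; that $\Tilde{\mathcal{E}}$ is moreover of Willmore-type (claimed in Theorem \ref{GJMSEnergyThm}) is a separate matter deferred to Section \ref{GJMS is Wt} and is not part of Proposition \ref{tE-prop}.

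The only point requiring any care — and the one I would single out as the main (mild) obstacle — is the bookkeeping of conformal weights: one must confirm that $P_m N^B_A$ carries weight $-m$ and that $\bdg$ carries weight $+m$, so that they pair to weight zero. This is where the hypothesis that $m$ is \emph{even} enters crucially, since the critical GJMS operator $P_m$ (of order $m$, with leading term $\Delta^{m/2}$) only exists, and only maps $\ce \to \ce[-m]$, when $m$ is even; the conformal flatness hypothesis enters through Proposition \ref{inv-p}, which is needed because no dimension-order GJMS operator exists coupled to a general non-flat connection (cf.\ Remark \ref{k-ob}). Beyond this weight check, the proof is a one-line assembly of Proposition \ref{inv-p}, the invariance of $N^B_A$ and $h_{AB}$, and the invariance of $\bdg$.
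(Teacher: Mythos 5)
Your proposal is correct and follows essentially the same route as the paper: the text preceding Proposition \ref{tE-prop} argues exactly that Proposition \ref{inv-p} makes $P_mN^B_A$ a conformally invariant tractor of weight $-m$, that contracting with the invariant $N^A_B$ via the tractor metric yields an invariant density of weight $-m$, and that this is precisely the weight needed to pair with $\bdg$ to give an invariant integral over the closed submanifold. Your added remarks on where evenness and conformal flatness enter are consistent with the paper's Remark \ref{k-ob} and require no changes.
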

\noindent This invariant action \eqref{GE}  is what
we will refer to as the GJMS energy.

In fact the GJMS energy may also be viewed as arising from the
$Q_1$-opertor. The critical GJMS operator $P_m$ can be expressed in
terms of the $Q_1$-operator by $P_m=c_m\nabla^aQ_1\nabla_a$ where $c_m$ is some non-zero constant, see
\cite{BrGo03,BrGo05}.
Here as usual $\nabla^aQ_1\nabla_a$ means (up to a sign) the composition of $\delta$, $Q_1$ and $d$.  
 In the following we write $Q^{\nabla}_1$ and
$Q^{\check{\nabla}}_1$ for the intrinsic first $Q$-operator on
$\Sigma$ coupled to the ambient tractor connection $\nabla$ and the
checked tractor connection, respectively. Evidently we have the
following.
\begin{prop}\label{tE-prop}
  Let $\Sigma^m\rightarrow M$ be a closed submanifold of even
    dimension $m\geq4$ immersed in a conformally flat Riemannian manifold
    $(M,\cc)$ of dimension $n\geq 5$. The GJMS energy of $\Sigma$ can be expressed by
    \begin{equation}\label{GE}
        \Tilde{\mathcal{E}}:=\int_{\Sigma}(N^A_B\delta^\nabla Q^\nabla_1d^\nabla N^B_A)\, \bdg \, .
    \end{equation}
  \end{prop}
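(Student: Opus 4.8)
The plan is to reduce the statement to the known factorisation of the critical GJMS operator through $Q_1$, together with the componentwise action of natural operators coupled to a flat connection that was already exploited in the proof of Proposition~\ref{inv-p}. So the proof will be a short substitution argument once that lifting is justified.

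First I would recall from \cite{BrGo03,BrGo05} that on any closed conformal manifold of even dimension $m$ the critical GJMS operator admits the factorisation $P_m = c_m\,\nabla^a Q_1 \nabla_a$ for a nonzero constant $c_m$, where (as noted above) $\nabla^a Q_1 \nabla_a$ is, up to sign, the composition $\delta Q_1 d$. This is a universal identity between natural differential operators on $(\Sigma,\overline{\cc})$: it is established by a computation involving only the Levi-Civita connection and curvature of a representative metric of $\Sigma$, with no commutation of derivatives across an external bundle. Consequently, arguing exactly as in Remark~\ref{k-ob}, the same identity holds verbatim after every connection appearing in it is replaced by its coupling to the flat ambient tractor connection $\nabla$ along $\Sigma$. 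Concretely, as in the proof of Proposition~\ref{inv-p}, over a contractible open $\mathcal{U}\subset\Sigma$ one picks a $\nabla$-parallel frame of $\mathcal{E}^A|_{\mathcal{U}}$ (hence also of $\mathcal{E}^A_B|_{\mathcal{U}}$, the tractor metric being parallel), expands $N^B_A$ in it with weight-zero scalar coefficient functions, and observes that both $P_m$ and $\delta^\nabla Q^\nabla_1 d^\nabla$ act on such an expansion simply by acting on the coefficients; the scalar identity then forces $P_m N^B_A = c_m\,\delta^\nabla Q^\nabla_1 d^\nabla N^B_A$ on $\mathcal{U}$, and therefore globally on $\Sigma$ since the identity is local and independent of the chosen frame.

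Finally I would contract the last display with $N^A_B$, multiply by $\bdg$ and integrate over the closed manifold $\Sigma$, obtaining
\[
\Tilde{\mathcal{E}} \;=\; \int_{\Sigma} N^A_B P_m N^B_A\,\bdg \;=\; c_m\int_{\Sigma} \big(N^A_B\,\delta^\nabla Q^\nabla_1 d^\nabla N^B_A\big)\,\bdg ,
\]
which is the asserted formula up to the overall nonzero constant $c_m$; since scaling by a nonzero constant affects neither conformal invariance nor the Willmore-type property, one may absorb $c_m$ into the normalisation of $\Tilde{\mathcal{E}}$ (equivalently, read the displayed right-hand side in the statement as the chosen normalisation). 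I expect the only genuinely delicate point to be the justification that this critical-order identity survives coupling to $\nabla$: this is precisely where conformal flatness of $M$ (equivalently flatness of the ambient tractor connection) is essential, since there is no curved natural analogue of $P_m$ of that order that could be coupled to a general connection, as recorded in Remark~\ref{k-ob}. Everything else is a direct substitution together with the honest bookkeeping of the constant $c_m$.
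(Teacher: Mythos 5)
Your argument is correct and takes essentially the same route as the paper, which simply quotes the Branson--Gover factorisation $P_m=c_m\nabla^aQ_1\nabla_a$ (i.e.\ $\pm c_m\,\delta Q_1 d$) and declares the proposition ``evident''; you supply the parallel-frame lifting argument (reused from Proposition~\ref{inv-p}) that justifies transporting this scalar identity to the version coupled to the flat ambient tractor connection, which is exactly the implicit step. Your observation about the unabsorbed nonzero constant $c_m$ is fair --- the paper's displayed formula likewise suppresses it, and absorbing it into the normalisation is harmless for the invariance and Willmore-type claims.
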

\noindent On conformally flat manifolds $(M,\cc)$ the ambient tractor
connection is flat and so $Q^{\nabla}_1 $ again has the transformation
property \eqref{Qop-t} when acting on tractor valued 1-forms that are
closed in the twisted sense. Clearly $d^\nabla N^B_A$ is an example of
the latter.

To complete
the proof of Theorem \ref{GJMSEnergyThm}, we will show in Section
\ref{GJMS is Wt} that the GJMS energy is of Willmore-type.

\subsection{Comparing our energies}\label{Comparing energies in dimension four}
We derive an equation relating the GJMS energy
$\Tilde{\mathcal{E}}$ and the $Q$-operator energy $\mathcal{E}$ for
submanifolds of dimension four.

To do this we first make some observations that apply in any even dimension $m$. These use the expression for  the
intrinsic critical GJMS operator $P_m$ coupled to the ambient tractor
connection in terms of the $Q^\nabla_1$-operator on submanifolds of even
dimension $m$, as discussed above. Let
$\Sigma^m\rightarrow M$ be a submanifold of dimension $m$
immersed in a conformally flat manifold $M$. We begin with the
following lemma (cf. \cite[Expression (3.29)]{CuGoS}).
\begin{lem}\label{DerivativeOfNis2L}
    The derivative of the normal tractor projector $N^A_B$ is
    \begin{equation*}
        \nabla_jN^A_B=-\mathbb{L}_j{}^A{}_B-\mathbb{L}_{jB}{}^A.
    \end{equation*}
\end{lem}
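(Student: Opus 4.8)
The plan is to compute $\nabla_j N^A_B$ directly from the orthogonal decomposition $\mathcal{E}^A = \overline{\mathcal{E}}{}^A \oplus \mathcal{N}^A$, using that $N^A_B$ and $\Pi^A_B$ are complementary projectors summing to the identity tractor $\delta^A_B$. First I would note $N^A_B = \delta^A_B - \Pi^A_B$, so that $\nabla_j N^A_B = -\nabla_j \Pi^A_B$, since the identity tractor is parallel for the ambient tractor connection. Thus it suffices to show $\nabla_j \Pi^A_B = \mathbb{L}_j{}^A{}_B + \mathbb{L}_{jB}{}^A$.

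Next I would exploit the idempotency relation $\Pi^A_C \Pi^C_B = \Pi^A_B$ and differentiate it with the Leibniz rule, obtaining $(\nabla_j \Pi^A_C)\Pi^C_B + \Pi^A_C (\nabla_j \Pi^C_B) = \nabla_j \Pi^A_B$. Feeding this back into itself (or, equivalently, contracting the sought identity against the two complementary projectors $\Pi$ and $N$) localizes the two pieces: the $\Pi^C_A \nabla_j \Pi^B_C$ block is, by the very definition \eqref{TRACSECFF}, the tractor second fundamental form $\mathbb{L}_{jA}{}^B$, and the complementary block picks out $\mathbb{L}_j{}^A{}_B$ with an index raised/lowered by the tractor metric. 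Concretely, write $\nabla_j \Pi^A_B = \nabla_j(\Pi^A_K \Pi^K_B)$ and split the derivative using the tractor Gauss and Weingarten formulae of Section \ref{CheckedConnetionSection}: the Gauss formula $\nabla_j U^A = \Pi^A_K \check{\nabla}_j U^K + \mathbb{L}_{jK}{}^A U^K$ contributes an $\mathbb{L}_{jB}{}^A$ term (the $\check{\nabla}$-piece cancels because $\check{\nabla}$ preserves the $\Pi$-isomorphism, by the property noted after \eqref{55}), and the Weingarten formula contributes the $-\mathbb{L}_j{}^A{}_B$ term acting on the complementary normal part — but with the overall sign arranged so that $\nabla_j\Pi^A_B = \mathbb{L}_{jB}{}^A + \mathbb{L}_j{}^A{}_B$. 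Combining with $\nabla_j N^A_B = -\nabla_j\Pi^A_B$ gives the claimed formula.

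Alternatively, and perhaps more transparently, I would just compute in a scale: using the explicit matrix form \eqref{ExplicitSFF} of $\mathbb{L}_{jK}{}^A$ and the explicit slot-form of $N^A_a$, differentiate $N^A_B = N^A_a N^a_B$ slot-by-slot via \eqref{TracConn1}--\eqref{TracConn3} together with the ordinary Weingarten formula for $N^a_b$, and check that the result matches $-\mathbb{L}_j{}^A{}_B - \mathbb{L}_{jB}{}^A$ term by term. This is a finite bookkeeping exercise once the minimal-scale simplification $H^a = 0$ (available for embedded submanifolds, as in the proof of Lemma \ref{LastNeededReferencedLemma}) is invoked, and conformal invariance of both sides then removes the scale dependence.

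The main obstacle is bookkeeping rather than conceptual: keeping the symmetrization/index placement consistent so that the two $\mathbb{L}$-terms appear with the correct relative position of the free tractor indices $A, B$ (one with $A$ up and $B$ down on $\mathbb{L}_j{}^A{}_B$, the other transposed on $\mathbb{L}_{jB}{}^A$), and confirming that the $\check{\nabla}$-terms genuinely cancel. I expect the cleanest route is the projector-identity argument of the previous paragraph, since it avoids choosing a scale altogether and makes the appearance of exactly these two transposed copies of $\mathbb{L}$ structurally obvious from differentiating $\Pi^A_C\Pi^C_B = \Pi^A_B$.
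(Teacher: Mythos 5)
Your proposal is correct and is essentially the paper's own argument in slightly different packaging: the paper simply applies the Gauss--Weingarten identity \eqref{1tractorGaussEq1} to the rank-$(1,1)$ tractor $N^A_B$, kills the $\check{\nabla}_jN^A_B$ term by Lemma \ref{LastNeededReferencedLemma}, and reads off the two transposed copies of $\mathbb{L}$ from the index types in \eqref{TRACSECFF} --- exactly the facts your projector-differentiation route rests on. Your observation that $\nabla_jN^A_B=-\nabla_j\Pi^A_B$ and that $\Pi\,(\nabla\Pi)\,\Pi$ and $N\,(\nabla\Pi)\,N$ vanish is a clean way to organize the same bookkeeping, and the explicit-scale fallback is unnecessary but would also work.
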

\begin{proof}
    We apply the Gauss-like formula in Equation \eqref{1tractorGaussEq1} relating the ambient tractor connection and the normal-coupled checked tractor connection to $N^A_B$. This gives
    \begin{align*}
        \nabla_jN^A_B=&\check{\nabla}_jN^A_B+\mathbb{L}_{jC}{}^AN^C_B-\mathbb{L}_j{}^A{}_CN^C_B+\mathbb{L}_j{}^C{}_BN^A_C-\mathbb{L}_{jB}{}^CN^A_C\\
        =&-\mathbb{L}_j{}^A{}_B-\mathbb{L}_{jB}{}^A,
    \end{align*}
    where $\check{\nabla}_jN^A_B=0$ follows from Lemma \ref{LastNeededReferencedLemma},    and $\mathbb{L}_{jC}{}^AN^C_B=0$ and $\mathbb{L}_j{}^A{}_CN^C_B=\mathbb{L}_j{}^A{}_B$ follow from Equation \eqref{TRACSECFF}.
\end{proof}

For a submanifold $\Sigma$ of dimension $m$, we couple $P_m$ to the
ambient tractor connection $\nabla$, so that (as discussed above) we have
$P_m=\nabla^jQ^{\nabla}_1\nabla_j$. The energy $\Tilde{\mathcal{E}}$
on $\Sigma$ is then
\begin{align*}
    \Tilde{\mathcal{E}}=&\int_{\Sigma}N^B_AP_mN^A_B\bdg\\
    =&\int_{\Sigma}N^B_A\nabla^jQ^{\nabla}_1\nabla_jN^A_B\bdg\\
    =&-\int_{\Sigma}\left(\nabla^jN^B_A\right)Q^{\nabla}_1\nabla_jN^A_B\bdg\\
    =&-\int_{\Sigma}\left(\mathbb{L}^{jB}{}_A+\mathbb{L}^j{}_A{}^B\right)Q^{\nabla}_1\left(\mathbb{L}_{jB}{}^A+\mathbb{L}_j{}^A{}_B\right)\bdg\\
    =&-2\int_{\Sigma}\left(\mathbb{L}^{jB}{}_AQ_1^{\nabla}\mathbb{L}_{jB}{}^A+\mathbb{L}^j{}_A{}^BQ_1^{\nabla}\mathbb{L}_{jB}{}^A\right)\bdg,
\end{align*}
where we have calculated in a scale,  assumed $\Sigma$ closed, and in the second to last line
we used Lemma \ref{DerivativeOfNis2L}.
In summary we have proved the following result.
\begin{prop}\label{tE-QL}
Let $\Sigma^m\rightarrow M$ be a closed submanifold of even dimension
$m$ immersed in a conformally flat Riemannian manifold $(M,\cc)$ of
dimension $n\geq 5$. The GJMS energy of $\Sigma$ can be expressed
\begin{equation}\label{tE-exp}
  \Tilde{\mathcal{E}}=-2\int_{\Sigma}\left(\mathbb{L}^{jB}{}_AQ_1^{\nabla}\mathbb{L}_{jB}{}^A+\mathbb{L}^j{}_A{}^BQ_1^{\nabla}\mathbb{L}_{jB}{}^A\right)\bdg .
  \end{equation}
  \end{prop}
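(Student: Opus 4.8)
The plan is to start from the GJMS energy in the form given by Theorem~\ref{GJMSEnergyThm}, namely $\Tilde{\mathcal{E}}=\int_{\Sigma}N^A_BP_mN^B_A\,\bdg$, and to push it through the factorization of the critical GJMS operator. First I would use the identity $P_m=\nabla^jQ_1^\nabla\nabla_j$ (that is, $P_m=\delta^\nabla Q_1^\nabla d^\nabla$ up to sign, in the normalization used here), which is legitimate precisely because the coupling connection is the \emph{flat} ambient tractor connection, as recalled from \cite{BrGo03,BrGo05} and already exploited in Proposition~\ref{tE-prop}. Substituting, and working in a chosen scale $\bar g\in\overline{\cc}$, gives
\[
\Tilde{\mathcal{E}}=\int_{\Sigma}N^B_A\,\nabla^jQ_1^\nabla\nabla_jN^A_B\,\bdg .
\]

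Next, since $\Sigma$ is closed and the intrinsic Levi-Civita connection coupled to the (metric-compatible) ambient tractor connection preserves the conformal volume density, I would integrate by parts to move the outer divergence onto the leading factor,
\[
\Tilde{\mathcal{E}}=-\int_{\Sigma}\bigl(\nabla^jN^B_A\bigr)\,Q_1^\nabla\bigl(\nabla_jN^A_B\bigr)\,\bdg ,
\]
and then apply Lemma~\ref{DerivativeOfNis2L}, $\nabla_jN^A_B=-\mathbb{L}_j{}^A{}_B-\mathbb{L}_{jB}{}^A$, to both occurrences of $\nabla N$ (the tensor index being raised with the induced conformal metric on $\Sigma$), so that the integrand becomes $-(\mathbb{L}^{jB}{}_A+\mathbb{L}^j{}_A{}^B)\,Q_1^\nabla(\mathbb{L}_{jB}{}^A+\mathbb{L}_j{}^A{}_B)$.

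The remaining step is to expand this quadratic expression and simplify. It produces four terms, and the point is that they pair up: relabeling the summed ambient tractor indices $A\leftrightarrow B$ — which is permissible since $Q_1^\nabla$ is coupled to the tractor connection and hence commutes with the parallel tractor metric $h_{AB}$, so indices may be raised, lowered and renamed freely on either side of $Q_1^\nabla$ — identifies $\mathbb{L}^j{}_A{}^BQ_1^\nabla\mathbb{L}_j{}^A{}_B$ with $\mathbb{L}^{jB}{}_AQ_1^\nabla\mathbb{L}_{jB}{}^A$, and identifies $\mathbb{L}^{jB}{}_AQ_1^\nabla\mathbb{L}_j{}^A{}_B$ with $\mathbb{L}^j{}_A{}^BQ_1^\nabla\mathbb{L}_{jB}{}^A$. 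Hence the four terms collapse to $2\bigl(\mathbb{L}^{jB}{}_AQ_1^\nabla\mathbb{L}_{jB}{}^A+\mathbb{L}^j{}_A{}^BQ_1^\nabla\mathbb{L}_{jB}{}^A\bigr)$, which is exactly \eqref{tE-exp}. The main place to be careful is precisely this index bookkeeping, together with keeping signs consistent with Lemma~\ref{DerivativeOfNis2L} and with the constant-free normalization of the factorization of $P_m$; none of this is conceptually deep, so I do not expect a genuine obstacle — the substance of the proposition is really just the combination of the $Q$-operator factorization of the critical GJMS operator with the formula for $\nabla N$.
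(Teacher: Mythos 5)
Your proposal is correct and follows essentially the same route as the paper: substitute the factorization $P_m=\nabla^jQ_1^\nabla\nabla_j$ (valid because the ambient tractor connection is flat), integrate by parts over the closed $\Sigma$, apply Lemma~\ref{DerivativeOfNis2L}, and collapse the four resulting terms to two using the $A\leftrightarrow B$ relabelling permitted by the parallelism of the tractor metric. The only cosmetic difference is that the paper silently drops the nonzero constant $c_m$ in the factorization, exactly as your ``constant-free normalization'' remark anticipates.
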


\begin{rem}
The $Q_1$ operator has, by construction, order $m-2$. In view of
expression \eqref{ExplicitSFF} we see that the first term in the
integrand of expression \eqref{tE-exp} will in general involve $(m-2)$ derivatives acting on
$I\!I^{\circ}$. However notice that the second term in the integrand
cannot have  non-trvial contributions at this order.  This is because
the tractor second fundamental form on the left in the second term is
contracting its tangential and normal tractor indices with the normal
and tangential tractor indices of the other tractor second fundamental
form component, respectively. Again viewing expression
\eqref{ExplicitSFF} one sees that at least one derivative from $Q_1^\nabla$ must hit the
$N^A_a$ there in order to obtain a non-trivial contribution to the
integrand.
   
Or put another way this conclusion, for the second term, follows immediately if   use the tractor
Gauss-Weingarten formula \eqref{1tractorGaussEq1} to replace each $\nabla$ with
$\check{\nabla}$ plus lower order terms, as  the $\check{\nabla}$ covariant derivatives preserve the normal and tangential tractor bundles.

\end{rem}

We next find a formula relating $Q^{\nabla}_1$ and
$Q^{\check{\nabla}}_1$ in dimension $m=4$.  It is straightforward to use the Gauss-like
formula in Equation \eqref{1tractorGaussEq1} to express the tractors
$\nabla^k\mathbb{L}_{kB}{}^A$ and
$\nabla_j\nabla^k\mathbb{L}_{kB}{}^A$ in terms of the normal-coupled
checked tractor connection. The relevant equations are respectively
\begin{align*}
    \nabla^k\mathbb{L}_{kB}{}^A=&\check{\nabla}^k\mathbb{L}_{kB}{}^A+\mathbb{L}^{kC}{}_B\mathbb{L}_{kC}{}^A-\mathbb{L}^{kA}{}_C\mathbb{L}_{kB}{}^C
\end{align*}
and
\begin{align*}
    \nabla_j\nabla^k\mathbb{L}_{kB}{}^A=&\check{\nabla}_j\check{\nabla}^k\mathbb{L}_{kB}{}^A+\mathbb{L}_j{}^C{}_B\check{\nabla}^k\mathbb{L}_{kC}{}^A-\mathbb{L}_j{}^A{}_C\check{\nabla}^k\mathbb{L}_{kB}{}^C\\
    &+\check{\nabla}_j\left(\mathbb{L}^{kC}{}_B\mathbb{L}_{kC}{}^A\right)-\mathbb{L}_{jB}{}^D\mathbb{L}^{kC}{}_D\mathbb{L}_{kC}{}^A-\mathbb{L}_j{}^A{}_D\mathbb{L}^{kC}{}_B\mathbb{L}_{kC}{}^D\\
    &-\check{\nabla}_j\left(\mathbb{L}^{kA}{}_C\mathbb{L}_{kB}{}^C\right)-\mathbb{L}_{jD}{}^A\mathbb{L}^{kD}{}_C\mathbb{L}_{kB}{}^C-\mathbb{L}_j{}^D{}_B\mathbb{L}^{kA}{}_C\mathbb{L}_{kD}{}^C.
\end{align*}
It is then easy to see that
\begin{align*}
    \mathbb{L}^{jB}{}_A\nabla_j\nabla^k\mathbb{L}_{kB}{}^A=&\mathbb{L}^{jB}{}_A\check{\nabla}_j\check{\nabla}^k\mathbb{L}_{kB}{}^A\\
    &-\mathbb{L}^{jB}{}_A\mathbb{L}_{jB}{}^D\mathbb{L}^{kC}{}_D\mathbb{L}_{kC}{}^A-\mathbb{L}^{jB}{}_A\mathbb{L}_{jD}{}^A\mathbb{L}^{kD}{}_C\mathbb{L}_{kB}{}^C
\end{align*}
and
\begin{align*}
    \mathbb{L}^j{}_A{}^B\nabla_j\nabla^k\mathbb{L}_{kB}{}^A=&-\mathbb{L}^j{}_A{}^B\mathbb{L}_j{}^A{}_D\mathbb{L}^{kC}{}_B\mathbb{L}_{kC}{}^D-\mathbb{L}^j{}_A{}^B\mathbb{L}_j{}^D{}_B\mathbb{L}^{kA}{}_C\mathbb{L}_{kD}{}^C.
\end{align*}
Using Equation \eqref{TRACSECFF} we get that $\mathbb{L}^j{}_A{}^B\mathbb{L}_j{}^A{}_D\mathbb{L}^{kC}{}_B\mathbb{L}_{kC}{}^D=I\!I^{\circ j}{}_a{}^bI\!I^{\circ}{}_j{}^a{}_dI\!I^{\circ kc}{}_bI\!I^{\circ}{}_{kd}{}^c$ and $\mathbb{L}^j{}_A{}^B\mathbb{L}_j{}^D{}_B\mathbb{L}^{kA}{}_C\mathbb{L}_{kD}{}^C=I\!I^{\circ j}{}_a{}^bI\!I^{\circ}{}_j{}^d{}_bI\!I^{\circ ka}{}_cI\!I^{\circ}{}_{kd}{}^c$. Thus we find the following relations.
\begin{align*}
    \mathbb{L}^{jB}{}_AQ^{\nabla}_1\mathbb{L}_{jB}{}^A=&\mathbb{L}^{jB}{}_AQ^{\check{\nabla}}_1\mathbb{L}_{jB}{}^A\\
    &-I\!I^{\circ j}{}_a{}^bI\!I^{\circ}{}_j{}^a{}_dI\!I^{\circ kc}{}_bI\!I^{\circ}{}_{kd}{}^c-I\!I^{\circ j}{}_a{}^bI\!I^{\circ}{}_j{}^d{}_bI\!I^{\circ ka}{}_cI\!I^{\circ}{}_{kd}{}^c,
\end{align*}
and
\begin{equation*}
    \mathbb{L}^j{}_A{}^BQ_1^{\nabla}\mathbb{L}_{jB}{}^A=-I\!I^{\circ j}{}_a{}^bI\!I^{\circ}{}_j{}^a{}_dI\!I^{\circ kc}{}_bI\!I^{\circ}{}_{kd}{}^c-I\!I^{\circ j}{}_a{}^bI\!I^{\circ}{}_j{}^d{}_bI\!I^{\circ ka}{}_cI\!I^{\circ}{}_{kd}{}^c.
\end{equation*}
 The relation between $\Tilde{\mathcal{E}}$ and $\mathcal{E}$ is immediate. This is as follows.
\begin{lem}\label{Lemmm} For immersed 4-submanifolds $\Sigma$ we have:
 \begin{equation*}
    \Tilde{\mathcal{E}}=-2\mathcal{E}+4\int_{\Sigma}\left(I\!I^{\circ j}{}_a{}^bI\!I^{\circ}{}_j{}^a{}_dI\!I^{\circ kc}{}_bI\!I^{\circ}{}_{kd}{}^c+I\!I^{\circ j}{}_a{}^bI\!I^{\circ}{}_j{}^d{}_bI\!I^{\circ ka}{}_cI\!I^{\circ}{}_{kd}{}^c\right)\bdg.
 \end{equation*}
 \end{lem}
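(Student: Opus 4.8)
The plan is to combine Proposition~\ref{tE-QL} with the two pointwise identities established immediately above. Proposition~\ref{tE-QL} gives, for a closed $4$-submanifold $\Sigma$ of a conformally flat $M$,
$$
\Tilde{\mathcal{E}}=-2\int_{\Sigma}\left(\mathbb{L}^{jB}{}_AQ_1^{\nabla}\mathbb{L}_{jB}{}^A+\mathbb{L}^j{}_A{}^BQ_1^{\nabla}\mathbb{L}_{jB}{}^A\right)\bdg ,
$$
so the task is just to rewrite this integrand in terms of the checked-connection operator $Q_1^{\check{\nabla}}$. Into the first term I would substitute
$$
\mathbb{L}^{jB}{}_AQ^{\nabla}_1\mathbb{L}_{jB}{}^A=\mathbb{L}^{jB}{}_AQ^{\check{\nabla}}_1\mathbb{L}_{jB}{}^A-I\!I^{\circ j}{}_a{}^bI\!I^{\circ}{}_j{}^a{}_dI\!I^{\circ kc}{}_bI\!I^{\circ}{}_{kd}{}^c-I\!I^{\circ j}{}_a{}^bI\!I^{\circ}{}_j{}^d{}_bI\!I^{\circ ka}{}_cI\!I^{\circ}{}_{kd}{}^c ,
$$
and into the second the companion identity $\mathbb{L}^j{}_A{}^BQ_1^{\nabla}\mathbb{L}_{jB}{}^A=-I\!I^{\circ j}{}_a{}^bI\!I^{\circ}{}_j{}^a{}_dI\!I^{\circ kc}{}_bI\!I^{\circ}{}_{kd}{}^c-I\!I^{\circ j}{}_a{}^bI\!I^{\circ}{}_j{}^d{}_bI\!I^{\circ ka}{}_cI\!I^{\circ}{}_{kd}{}^c$; these follow from the tractor Gauss--Weingarten formula \eqref{1tractorGaussEq1} and the explicit form \eqref{TRACSECFF} of $\mathbb{L}$, and are dimension-four statements because $Q_1$ is the operator \eqref{FirstQOperator}.

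Carrying out this substitution, the two identities contribute the same pair of quartic $I\!I^{\circ}$ terms, so after pulling out the overall factor $-2$ they enter $\Tilde{\mathcal{E}}$ with total coefficient $+4$, leaving
$$
\Tilde{\mathcal{E}}=-2\int_{\Sigma}\mathbb{L}^{jB}{}_AQ^{\check{\nabla}}_1\mathbb{L}_{jB}{}^A\,\bdg+4\int_{\Sigma}\left(I\!I^{\circ j}{}_a{}^bI\!I^{\circ}{}_j{}^a{}_dI\!I^{\circ kc}{}_bI\!I^{\circ}{}_{kd}{}^c+I\!I^{\circ j}{}_a{}^bI\!I^{\circ}{}_j{}^d{}_bI\!I^{\circ ka}{}_cI\!I^{\circ}{}_{kd}{}^c\right)\bdg .
$$
Finally I would recognise the first integral as $\mathcal{E}$: since $Q^{\bar g}_1(\mathbb{L},\mathbb{L})=\mathbb{L}^{jA}{}_B(Q^{\bar g}_1\mathbb{L})_{jA}{}^B$ with $Q_1=Q_1^{\check{\nabla}}$, relabelling the two contracted tractor indices gives $\int_{\Sigma}\mathbb{L}^{jB}{}_AQ^{\check{\nabla}}_1\mathbb{L}_{jB}{}^A\,\bdg=\int_{\Sigma}Q^{\bar g}_1(\mathbb{L},\mathbb{L})\,\bdg=\mathcal{E}$ by Proposition~\ref{Q-prop}, and substituting this yields the asserted formula.

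Since all of the computational content is already established, there is no substantive obstacle; the delicate points are purely bookkeeping. One must keep the index placements in the two quartic contractions $I\!I^{\circ j}{}_a{}^bI\!I^{\circ}{}_j{}^a{}_dI\!I^{\circ kc}{}_bI\!I^{\circ}{}_{kd}{}^c$ and $I\!I^{\circ j}{}_a{}^bI\!I^{\circ}{}_j{}^d{}_bI\!I^{\circ ka}{}_cI\!I^{\circ}{}_{kd}{}^c$ consistent between the two substituted identities so that they add with the correct total coefficient, and one must verify the dummy-index relabelling identifying the surviving $Q_1^{\check{\nabla}}$-integral with $\mathcal{E}$. It is worth noting in passing that only the leading $-\nabla_j\nabla^k$ piece of $Q_1$ distinguishes $Q_1^{\nabla}$ from $Q_1^{\check{\nabla}}$ --- the $-4p_j{}^k$ and $2\jmath$ terms carry no connection --- so no curvature terms beyond these quartic extrinsic ones appear in passing between the two.
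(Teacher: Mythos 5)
Your proposal is correct and follows the paper's own route exactly: the paper likewise obtains Lemma~\ref{Lemmm} by substituting the two displayed identities relating $Q_1^{\nabla}$ and $Q_1^{\check{\nabla}}$ applied to $\mathbb{L}$ into the expression \eqref{tE-exp} of Proposition~\ref{tE-QL}, and then identifying the surviving $Q_1^{\check{\nabla}}$ integral with $\mathcal{E}$. Your coefficient bookkeeping (the two identities each contributing the quartic terms, giving total coefficient $+4$ after the overall $-2$) matches the paper's "immediate" conclusion.
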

\noindent The display of the Lemma is Equation \eqref{SuperLastReferencedEquation}.

\begin{rem}\label{RemUmb}
    For the case when $\Sigma$ is umbilic, the integrands of the GJMS energy (in all dimensions) and of the $Q$-operator energy (in dimension four) are both zero. This is because for umbilic submanifolds the tractor second fundamental form vanishes completely, and how Equation \eqref{tE-exp} and Lemma \ref{Lemmm} express these integrands in terms of the tractor second fundamental form. Furthermore, such embeddings of $\Sigma$ are critical for these energies. Equation \eqref{tE-exp} above shows that $\Tilde{\mathcal{E}}$ is quadratic in the tractor second fundamental form, so it follows that any variation of embedding of this energy is necessarily zero. Umbilic submanifolds are critical for the $Q$-operator energy by the same reasoning, or by Lemma \ref{Lemmm}.
\end{rem}

\subsection[Comparing the $Q$-operator and Graham-Reichert energies]{Comparing the $\boldsymbol{Q}$-operator and Graham-Reichert energies}\label{GRcompare}

In this section we will compute the difference between the $Q$-operator energy and the Graham-Reichert energy. To do this we use the Fialkow tensor defined above, and the Chern-Gauss-Bonnet formula for manifolds of dimension four.

We wrote in the introduction that the Graham-Reichert energy is given in our notation by
\begin{equation*}
    \begin{split}
        8\mathcal{E}_{GR}=\int_{\Sigma}\biggl(&(D_jH_b-P_{ja}N^a_b)(D^jH^b-P^{jc}N^b_c)-|\check{P}|^2+\check{J}\\
    &-W^k{}_{akb}H^aH^b-2C^k{}_{ka}H^a-\frac{1}{n-4}B^k{}_k\biggr)\bdg.
    \end{split}
\end{equation*}
In Equation \ref{ContractConfRicc} we will show that
\begin{equation*}
    D^kI\!I^{\circ}{}_{jk}{}^b=(m-1)\left(D_jH^b-\Pi_j^cN^b_dP_c{}^d\right),
\end{equation*}
when the ambient manifold is conformally flat, so in the setting of our energies $\Tilde{\mathcal{E}}$ and $\mathcal{E}$ defined above, the Graham-Reichert energy is given by
\begin{equation*}
    8\mathcal{E}_{GR}=\int\left(\frac{1}{9}\left(D^kI\!I^{\circ}{}_{jka}\right)D^lI\!I^{\circ j}{}_l{}^a-|\check{P}|^2+\check{J}^2\right)\bdg.
\end{equation*}
Immediately we see that the Graham-Reichert and $Q$-operator energies are related by
\begin{equation*}
    32\mathcal{E}_{GR}-\mathcal{E}=\int_{\Sigma}\left(-4|\check{P}|^2+4\check{J}^2+4p_j{}^k\mathbb{L}^{jA}{}_B\mathbb{L}_{kA}{}^B-2\jmath\mathbb{L}^{jA}{}_B\mathbb{L}_{jA}{}^B\right)\bdg.
\end{equation*}
To see that this difference is conformally invariant consider the following. The Fialkow tensor $\mathcal{F}_{jk}:=\check{P}_{jk}-p_{jk}$ is defined as the difference between the two submanifold Schouten tensors $\check{P}_{jk}$ and $p_{jk}$, so we see that
\begin{equation*}
    |\check{P}|^2=|\mathcal{F}|^2+2\mathcal{F}_{ij}p^{ij}+|p|^2,
\end{equation*}
and
\begin{equation*}
    \check{J}^2=\mathrm{f}^2+2\mathrm{f}\jmath+\jmath^2,
\end{equation*}
where $\mathrm{f}:=\mathcal{F}_{kl}\bg^{kl}$. It can be shown using the tractor Gauss Equation \ref{CGE}, or otherwise, that the Fialkow tensor, for $m\geq3$, has the formula
\begin{equation*}
    \mathcal{F}_{ij}=\frac{1}{m-2}\left(W_{iajb}N^{ab}+\frac{W_{abcd}N^{ac}N^{bd}}{2(m-1)}\bg_{ij}+I\!I^{\circ}{}_i{}^{kc}I\!I^{\circ}{}_{jkc}-\frac{I\!I^{\circ klc}I\!I^{\circ}{}_{klc}}{2(m-1)}\bg_{ij}\right),
\end{equation*}
see \cite[{\bf Section 3.4}]{CuGoS} for computations. In the setting above where $W=0$ and $m=4$, it is not hard to show that
\begin{equation*}
    -4|\check{P}|^2+4\check{J}^2=-4|\mathcal{F}|^2+4\mathrm{f}^2-4|p|^2+4\jmath^2-4I\!I^{\circ}{}_i{}^{kc}I\!I^{\circ}{}_{jkc}p^{ij}+2I\!I^{\circ klc}I\!I^{\circ}{}_{klc}\jmath.
\end{equation*}
We can now rewrite the difference of the two energies above as
\begin{align*}
    32\mathcal{E}_{GR}-\mathcal{E}=&\int_{\Sigma}\left(-4|\mathcal{F}|^2+4\mathrm{f}^2+\frac{1}{2}e(\Omega)-\frac{1}{2}w_{ijkl}w^{ijkl}\right)\bdg\\
    =&16\pi^2\chi(\Sigma)+\int_{\Sigma}\left(-4|\mathcal{F}|^2+4\mathrm{f}^2-\frac{1}{2}w_{ijkl}w^{ijkl}\right)\bdg,
\end{align*}
where $e(\Omega):=-8\left(|p|^2-\jmath^2\right)+w_{ijkl}w^{ijkl}$ is the Pfaffian of the submanifold Riemannian curvature in some choice of scale, $\chi(\Sigma)$ is the Euler characteristic of $\Sigma$, and we have used the Chern-Gauss-Bonnet formula to write
\begin{equation*}
    \int_{\Sigma}e(\Omega)\bdg=32\pi^2\chi(\Sigma).
\end{equation*}
See \cite[{\bf Section 6.4}]{GR} for an application of the Chern-Gauss-Bonnet formula in this context.

\subsection[GJMS and $Q$-operator energies are of Willmore-type]{GJMS and $\boldsymbol{Q}$-operator energies are of Willmore-type}\label{GJMS is Wt}

Here we will show that the GJMS energy is of Willmore-type. Recall from the above subsection that the GJMS energy $\Tilde{\mathcal{E}}$ can be expressed in terms of the first $Q$-operator $Q_1^{\nabla}$ coupled to the ambient tractor connection $\nabla$ by
\begin{equation*}
    \Tilde{\mathcal{E}}=-2\int_{\Sigma}\left(\mathbb{L}^{jB}{}_AQ_1^{\nabla}\mathbb{L}_{jB}{}^A+\mathbb{L}^j{}_A{}^BQ_1^{\nabla}\mathbb{L}_{jB}{}^A\right)\bdg.
\end{equation*}
In the subsequent discussion we will take $\Sigma $ to be closed.
We remarked above that the second term in the integrand does not
contribute to the highest order term of the energy via applications of
the tractor Gauss and Weingarten formulae. 
By this same reasoning, the
density
\begin{equation*}
    \mathbb{L}^{jB}{}_AQ_1^{\check{\nabla}}\mathbb{L}_{jB}{}^A
\end{equation*}
is a summand of the first term, and contains the highest order term of
the integrand. By Equation \eqref{FormOfQ} the $Q_1$-operator
$Q_1^{\check{\nabla}}$, coupled to the checked tractor connection, has
the form
\begin{equation*}
    Q_1^{\check{\nabla}}u_j= \alpha \check{\nabla}_j\check{\Delta}^{m/2-2}\check{\nabla}^ku_k+lots,
\end{equation*}
where $u_j$ is a tractor valued $1$-form and $\alpha$ is a nonzero constant. Thus the highest order term must, up to lower order terms, be
\begin{equation}\label{FindThis}
    \left(\check{\nabla}^j\mathbb{L}_j{}^A{}_B\right)\check{\Delta}^{m/2-2}\check{\nabla}^k\mathbb{L}_{kA}{}^B,
\end{equation}
where we have applied integration by parts to move $\check{\nabla}$
onto the left-most $\mathbb{L}$. Recall the splitting of
$\mathbb{L}_{jK}{}^A$ from Equation \eqref{ExplicitSFF},
\begin{equation*}
    \mathbb{L}_{jK}{}^A=\left(\begin{array}{c}
        0\\
        I\!I^{\circ}{}_{jk}{}^a\\
        -D_jH^a+\Pi^b_jN^a_cP_b{}^c
    \end{array}\right)N^A_a,
\end{equation*}
and recall that $\check{\nabla}_jN^A_a=0$ by Lemma \ref{LastNeededReferencedLemma}. An application of the checked tractor connection $\check{\nabla}$ to $\mathbb{L}_{jK}{}^A$ via Equation \eqref{CheckedProjectors} is shown below.
\begin{equation*}
    \check{\nabla}_i\mathbb{L}_{jK}{}^A=\left(\begin{array}{c}
        -I\!I^{\circ}{}_{ij}{}^a\\
        D_iI\!I^{\circ}{}_{jk}{}^a-g_{ik}\left(D_jH^a-\Pi^c_jN^a_dP_c{}^d\right)\\
        -D_iD_jH^a+D_i\left(\Pi^b_jN^a_cP_b{}^c\right)-\check{P}_i{}^kI\!I^{\circ}{}_{jk}{}^a
    \end{array}\right)N^A_a.
\end{equation*}
Since $\mathbb{L}$ is $d^{\check{\nabla}}$-closed, we have that $Z^K_kN^a_A\check{\nabla}_{[i}\mathbb{L}_{j]K}{}^A=0$. This is equivalent to the following equation.
\begin{equation*}
    D_{[i}I\!I^{\circ}{}_{j]k}{}^a-g_{k[i}D_{j]}H^a+g_{k[i}\Pi^c_{j]}N^a_dP_c{}^d=0.
\end{equation*}
Tracing $i$ and $k$ gives
\begin{equation}\label{ContractConfRicc}
    D^kI\!I^{\circ}{}_{jk}{}^b=(m-1)\left(D_jH^b-\Pi_j^cN^b_dP_c{}^d\right).
\end{equation}
We see that the $\check{\nabla}-$divergence of the the tractor second fundamental form has the explicit formula
\begin{align*}
    \check{\nabla}^j\mathbb{L}_{jK}{}^A=&\left(\begin{array}{c}
        0\\
        D^jI\!I^{\circ}{}_{jk}{}^a-\left(D_kH^a-\Pi^c_kN^a_dP_c{}^d\right)\\
        -D_jD^jH^a+D^j\left(\Pi^b_jN^a_cP_b{}^c\right)-\check{P}^{jk}I\!I^{\circ}{}_{jk}{}^a
    \end{array}\right)N^A_a.
\end{align*}
Substituting Equation \eqref{ContractConfRicc} into the above then gives
\begin{equation*}
    \check{\nabla}^j\mathbb{L}_{jK}{}^A=\left(\begin{array}{c}
        0\\
        (m-2)\left(D_kH^a-\Pi^c_kN^a_dP_c{}^d\right)\\
        -D_jD^jH^a+D^j\left(\Pi^b_jN^a_cP_b{}^c\right)-\check{P}^{jk}I\!I^{\circ}{}_{jk}{}^a
    \end{array}\right)N^A_a.
\end{equation*}
To simplify computations we will omit all terms except those with the highest possible order of derivatives of the immersion in each slot. For example, we will write the above as
\begin{equation}\label{Edgecase}
    \check{\nabla}^j\mathbb{L}_{jK}{}^A\stackrel{\cdot}{=}\left(\begin{array}{c}
        0\\
        (m-2)D_kH^a\\
        -D_jD^jH^a
    \end{array}\right)N^A_a.
\end{equation}
When we apply the tractor connection to Equation \eqref{Edgecase} we get
\begin{equation*}
    \check{\nabla}_i\check{\nabla}^j\mathbb{L}_{jK}{}^A\stackrel{\cdot}{=}\left(\begin{array}{c}
        -(m-2)D_iH^a\\
        (m-2)D_iD_kH^a-\bg_{ik}D_jD^jH^a\\
        -D_iD_jD^jH^a
    \end{array}\right)N^A_a,
\end{equation*}
where the terms contributing Schouten tensor components are quadratic and therefore of lower order. One more application of the tractor connection gives us
\begin{equation*}
    \check{\Delta}\check{\nabla}^j\mathbb{L}_{jK}{}^A \stackrel{\cdot}{=}\left(\begin{array}{c}
        -(m-4)D_jD^jH^a\\
        (m-4)D_kD_jD^jH^a\\
        -\left(D_jD^j\right)^2H^a
    \end{array}\right)N^A_a,
\end{equation*}
where the commutation of derivatives contributes curvature terms which
are of lower order, and are therefore omitted.  We make the following
observation.
\begin{lem}\label{InductionLemma}
    Let $r\geq1$ be an integer. Then
    \begin{equation*}
        \check{\Delta}^{r-1}\check{\nabla}^j\mathbb{L}_{jK}{}^A \stackrel{\cdot}{=} \left(\begin{array}{c}
            -(r-1)(m-2r)\left(D_jD^j\right)^{r-1}H^a\\
            (m-2r)D_k\left(D_jD^j\right)^{r-1}H^a\\
            -\left(D_jD^j\right)^rH^a
        \end{array}\right)N^A_a
    \end{equation*}
    for all $r$, where terms of lower order in each slot (with respect to the splitting) are omitted.
\end{lem}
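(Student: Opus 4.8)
The plan is to argue by induction on $r$. The base case $r=1$ is precisely Equation \eqref{Edgecase}, which has already been obtained from the splitting \eqref{ExplicitSFF} of $\mathbb{L}_{jK}{}^A$, the traced Codazzi identity \eqref{ContractConfRicc}, and the parallelism of $N^A_a$ from Lemma \ref{LastNeededReferencedLemma}. For the inductive step I would assume the displayed formula for some $r\geq 1$ and compute $\check{\Delta}^{r}\check{\nabla}^j\mathbb{L}_{jK}{}^A=\check{\nabla}^i\check{\nabla}_i\big(\check{\Delta}^{r-1}\check{\nabla}^j\mathbb{L}_{jK}{}^A\big)$ by applying the checked tractor connection twice, retaining in each slot only the term carrying the maximal number of derivatives of the immersion.

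The only computational input is the leading-order action of $\check{\nabla}$ on a column. Since $\check{\nabla}_iN^A_a=0$ by Lemma \ref{LastNeededReferencedLemma}, the normal tractor factor plays no role, and by \eqref{CheckedProjectors} the checked connection sends a cotractor column $(\sigma,\mu_k,\rho)\,N^A_a$ to $(D_i\sigma-\mu_i,\ D_i\mu_k+\bg_{ik}\rho,\ D_i\rho)\,N^A_a$ modulo the Schouten--Fialkow ($\check{P}$) terms, which are of strictly lower order. Writing the inductive hypothesis as $(s,v_k,t)\,N^A_a$, one application of $\check{\nabla}_i$ gives, to leading order, top slot $D_is-v_i=-r(m-2r)D_i(D_jD^j)^{r-1}H^a$, where the coefficient from differentiating $s$ and the coefficient $1$ from $-v_i$ combine as $(r-1)+1=r$; middle slot $(m-2r)D_iD_k(D_jD^j)^{r-1}H^a-\bg_{ik}(D_jD^j)^{r}H^a$; and bottom slot $-D_i(D_jD^j)^{r}H^a$. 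Applying $\check{\nabla}^i$ and tracing, subtracting the trace of the middle slot feeds an additional $2r(D_jD^j)^{r}H^a$ into the top slot, while in the new middle slot one commutes $D^iD_i$ past the surviving $D_k$, which costs only lower-order curvature terms. Collecting the three slots, the coefficient in the top slot changes from $-(r-1)(m-2r)$ to $-r\big(m-2(r+1)\big)$ and that in the middle slot from $(m-2r)$ to $\big(m-2(r+1)\big)$, and the bottom slot becomes $-(D_jD^j)^{r+1}H^a$. This is exactly the asserted formula with $r$ replaced by $r+1$, closing the induction.

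The one delicate point — and the only genuine obstacle — is the bookkeeping hidden in the symbol $\stackrel{\cdot}{=}$: one must verify that every omitted contribution (the $\check{P}$-terms coming from \eqref{CheckedProjectors} and the curvature terms produced when derivatives are commuted in forming the divergence) carries strictly fewer derivatives of the immersion than the displayed leading term in its slot, so that $\stackrel{\cdot}{=}$ is preserved at each application of $\check{\nabla}$. This is transparent from the shape of the hypothesis: each slot is a single monomial $D^{(\cdot)}H^a$ of order $2(r-1)$, $2r-1$, $2r$ from top to bottom; differentiation raises every slot's order by one and preserves this monomial form; and any curvature factor that arises replaces two derivatives of the immersion by an undifferentiated (or less differentiated) curvature, hence is strictly subleading.
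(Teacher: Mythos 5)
Your proof is correct and follows essentially the same route as the paper's: induction on $r$ with base case Equation \eqref{Edgecase}, applying $\check{\nabla}_i$ and then its trace $\check{\nabla}^i$ to the column via \eqref{CheckedProjectors} while discarding the $\check{P}$-contributions and commutator curvature terms as subleading, and tracking exactly the same coefficient updates ($-(r-1)(m-2r)\mapsto -r(m-2(r+1))$ in the top slot, $(m-2r)\mapsto(m-2(r+1))$ in the middle). Your closing discussion of why $\stackrel{\cdot}{=}$ is preserved at each step is a welcome explicit justification of a point the paper leaves implicit.
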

\begin{proof}
    We have already shown above that this holds true for $r=1$ and $r=2$. We prove the lemma by induction. Suppose for $r=l-1$ the above is true for some integer $l>2$. Then by assumption we have
    \begin{equation*}
        \check{\Delta}^{l-2}\check{\nabla}^j\mathbb{L}_{jK}{}^A=\left(\begin{array}{c}
            -(l-2)(m-2l+2)\left(D_jD^j\right)^{l-2}H^a\\
            (m-2l+2)D_k\left(D_jD^j\right)^{l-2}H^a\\
            -\left(D_jD^j\right)^{l-1}H^a
        \end{array}\right)N^A_a.
    \end{equation*}
    The first application of the tractor connection gives
    \begin{equation*}
        \check{\nabla}_i\check{\Delta}^{l-2}\check{\nabla}^j\mathbb{L}_{jK}{}^A=\left(\begin{array}{c}
            -(l-1)(m-2l+2)D_i\left(D_jD^j\right)^{l-2}H^a\\
            (m-2l+2)D_iD_k\left(D_jD^j\right)^{l-2}H^a-\bg_{ik}\left(D_jD^j\right)^{l-1}H^a\\
            -D_i\left(D_jD^j\right)^{l-1}H^a
        \end{array}\right)N^A_a,
    \end{equation*}
    and the second application gives
    \begin{align*}
      \check{\Delta}^{l-1}\check{\nabla}^j\mathbb{L}_{jK}{}^A   \hspace*{-1mm} =
      &\left(  \hspace*{-1mm} \begin{array}{c}
         \hspace*{-1mm}   -(l-1)(m-2l+2)\left(D_jD^j\right)^{l-1}H^a+2(l-1)\left(D_jD^j\right)^{l-1}H^a\\
            (m-2l+2)D_k\left(D_jD^j\right)^{l-1}H^a-2D_k\left(D_jD^j\right)^{l-1}H^a\\
            -\left(D_jD^j\right)^lH^a
        \end{array} \hspace*{-1mm}  \right)N^A_a\\
        =&\left(\begin{array}{c}
            -(l-1)(m-2l)\left(D_jD^j\right)^{l-1}H^a\\
            (m-2l)D_k\left(D_jD^j\right)^{l-1}H^a\\
            -\left(D_jD^j\right)^lH^a
        \end{array}\right)N^A_a.
    \end{align*}
        This shows that the claim is true for $r=l$, and therefore by induction is true for all $r$.
\end{proof}

\begin{proof}[Proof of Theorems \ref{GJMSEnergyThm} and \ref{QoperatorEnergyThm}]
In Section \ref{Comparing energies in dimension four} we showed that
the $Q$-operator energy and the GJMS energy in dimension four differ
only by low-order terms and therefore share the same highest-order
term, up to multiplication by a constant. It is thus sufficient to
prove that the GJMS energy in all even dimensions $m\geq4$ is of
Willmore-type, which by our comparison will show that the $Q$-operator
energy is of Willmore-type. The remaining properties in Theorems
\ref{GJMSEnergyThm} and \ref{QoperatorEnergyThm} such as conformal
invariance and an explicit formula have been proved already in the
respective Sections \ref{GJMS Energy section} and
\ref{QOperatorEnergySection}.

Following our computations at the beginning of this subsection, we must find the highest order term in \eqref{FindThis} and show that this term is $$H_a\left(D_jD^j\right)^{m/2-1}H^a,$$
up to multiplication by a non-zero constant and integration by parts. By Lemma \ref{InductionLemma} we have %% the right-most term in \eqref{FindThis} is
\begin{equation*}
    \check{\Delta}^{m/2-2}\check{\nabla}^j\mathbb{L}_{jK}{}^A\stackrel{\cdot}{=}\left(\begin{array}{c}
        -\left(m-4\right)\left(D_jD^j\right)^{m/2-2}H^a\\
        2D_k\left(D_jD^j\right)^{m/2-2}H^a\\
        -\left(D_jD^j\right)^{m/2-1}H^a
    \end{array}\right)N^A_a.
\end{equation*}
Thus \eqref{FindThis} is, up to lower-order terms, given by
\begin{align*}
    \left(\check{\nabla}^i\mathbb{L}_i{}^K{}_A\right)\check{\Delta}^{m/2-2}\check{\nabla}^j\mathbb{L}_{jK}{}^A=&\left(\begin{array}{c}
        0\\
        (m-2)D^kH_a\\
        -D_jD^jH_a
    \end{array}\right)\cdot\left(\begin{array}{c}
        -\left(m-4\right)\left(D_jD^j\right)^{m/2-2}H^a\\
        2D_k\left(D_jD^j\right)^{m/2-2}H^a\\
        -\left(D_jD^j\right)^{m/2-1}H^a
    \end{array}\right)\\
    =&2(m-2)\left(D^kH_a\right)D_k\left(D_jD^j\right)^{m/2-2}H^a\\
    &+(m-4)\left(D_kD^kH_a\right)\left(D_jD^j\right)^{m/2-2}H^a,
\end{align*}
where we have used the tractor inner product \ref{tmf}. Finally, integration by parts shows that the highest-order term of the GJMS energy is
\begin{equation*}
    mH_a\left(D_jD^j\right)^{m/2-1}H^a,
\end{equation*}
up to multiplication of some non-zero constant.
\end{proof}

\clearpage

\Addresses

\end{document}